 \definecolor{darkblue}{rgb}{0,0.08,0.50} 
\DeclareFontFamily{OT1}{rsfs}{}
\DeclareFontShape{OT1}{rsfs}{n}{it}{<-> rsfs10}{}
\DeclareMathAlphabet{\curly}{OT1}{rsfs}{n}{it}
\newcommand{\trace}{\operatorname{tr}}
\newcommand{\Scal}{\operatorname{Scal}}
\renewcommand{\tt}{\ttfamily}
\newcommand{\orb}{\mathrm{orb}} 
\newcommand{\Ric}{\operatorname{Ric}}
\newcommand{\average}{\mathrm{av}}
\newcommand\C{\mathbb C}
\newcommand\R{\mathbb R}
\newcommand\ZZ{\mathbb Z}
\newcommand\PP{\mathbb P}
\newcommand{\K}{\mathcal K}
\newcommand{\B}{\mathcal B}
\renewcommand\O{\mathcal O}
\newcommand{\del}{\partial}
\newcommand{\ord}{\operatorname{ord}}
\makeatletter \@addtoreset{equation}{chapter} \makeatother
\newtheorem{thm}[equation]{Theorem}
\newtheorem{lem}[equation]{Lemma}
\newtheorem{cor}[equation]{Corollary}
\newtheorem{prop}[equation]{Proposition}
\renewcommand{\Re}{\operatorname{Re}}
\theoremstyle{definition}
\newtheorem{defn}[equation]{Definition}
\newtheorem{rmk}[equation]{Remark}
\newtheorem*{rmk*}{Remark}
\begin{document}

\setcounter{secnumdepth}{1}
\pagestyle{myheadings}
\title{Weighted Bergman kernels on orbifolds}
\author{Julius Ross and Richard Thomas}
\date{}


\hypersetup{
pdfauthor = {J. Ross and R. P. Thomas},
pdftitle = {Weighted Bergman kernels on orbifolds},
pdfkeywords = {MSC  32A25},
pdfcreator = {LaTeX with hyperref package}
}


\bibliographystyle{amsalpha_ross}
\maketitle
\begin{abstract}
  We describe a notion of ampleness for line bundles on orbifolds with cyclic quotient singularities that is related to embeddings in weighted projective space, and  prove a global asymptotic expansion for a weighted Bergman kernel associated to such a line bundle.
\end{abstract}


\newcommand{\sumskipp}[3]{
\sum_{#1\equiv #2} #3
}

\newcommand{\sumskip}[1]{\sumskipp{i}{u}{#1}}

\chapter{Introduction}

Let $(X,\omega)$ be a compact $n$-dimensional K\"ahler manifold and $L$ be a positive line bundle on $X$ equipped with a hermitian metric $h$ whose curvature form is $-2\pi i\omega$.    These induce an $L^2$-metric on the space of sections $H^0(L^k)$, and given an orthonormal basis $\{s_\alpha\}$ the Bergman kernel is  the smooth function
$$
 B_k(x) = \sum_{\alpha} |s_\alpha(x)|^2,\label{eq:defbergmanL}
$$
where $|s_{\alpha}(x)|$ is the pointwise norm induced by $h$ (more precisely, its
$k$th power considered as a metric on $L^k$). More invariantly, the
$L^2$-metric on $H^0(L^k)$ induces a Fubini-Study metric on $\PP(H^0(L^k)^*)\supset X$
and $\O(1)$ over it; restricting to $X$ we get a metric $h_{FS}$ on $\O_X(1)\cong L^k$
which may not equal $h$. Then $B_k=h/h_{FS}$ is their ratio.

Work of Fefferman \cite{fefferman:74:bergm_kernel_bihol_mappin_pseud_domain},
Yau \cite{yau:86:nonlin_analy_in_geomet},
Tian \cite{tian:90:set_polar_kaehl_metric_algeb_manif}, 
Zelditch \cite{zelditch:98:szeg_o_kernel_theor_tian}, Catlin
\cite{catlin:99:bergm_kernel_theor_tian} and Ruan \cite{ruan:98:canon_coord_bergm_bergm_metric}
describes the asymptotic behaviour of this
function for large $k$:  there exist smooth functions $b_1,\ldots,b_N$ and a global expansion
\begin{equation}
 B_k  = k^n + b_1k^{n-1} + \cdots +b_N k^{n-N} + O(k^{n-N-1}) \label{eq:manifoldexpansion}
\end{equation}
for $k\gg 0$, where $O(k^{n-N-1})$ can even be taken in the $C^\infty$ norm.
Moreover, the $b_j$ can be expressed in terms of the derivatives of the
metric \cite{lu:00:lower_order_terms_asymp_expan}. In particular, $b_1$ is half the scalar curvature of $\omega$, leading to the importance of this expansion in
the theory of constant scalar curvature K\"ahler metrics \cite{yau:86:nonlin_analy_in_geomet,
donaldson(01):scalar_curvat_projec_embed}. \medskip

Throughout this paper, $X$ will instead be an orbifold (usually compact, except when we
work locally) with cyclic stabiliser groups and an orbifold line bundle $L$ over it.
We let $\ord_x$ denote the order of a point $x\in X$: the size of the cyclic stabiliser
group of any lift of this point in an orbifold chart. When $X$ is compact, $\ord(X)$
denotes the least common multiple of this finite collection of integers.

Then the same expansion of $B_k$ holds away from the
orbifold locus \cite{dai-liu-ma(06):asymp_expansion_bergman_kernel,
song:szegoe_kernel_orbif_circl_bundl}. And while the precise form of the expansion over the orbifold locus is given in \cite{dai-liu-ma(06):asymp_expansion_bergman_kernel}, the result is not smooth; see Remark \ref{rmk:DLM}.
 
We illustrate the situation locally using the simplest example: the
orbifold $\C/(\ZZ/2)$ with local coordinate $z$
on $\C$ acted on by $\ZZ/2$ via $z\mapsto-z$. Then $x=z^2$ is a local
coordinate on the quotient thought of as a manifold. An ordinary (non-orbifold) line
bundle is one pulled back from
the quotient, i.e., one which
has trivial $\ZZ/2$-action upstairs when considered as a trivial line
bundle there. This has invariant sections $\C[x]=\C[z^2]$. We do not consider such line
bundles, as their sections only see the quotient manifold $\mathrm{Spec}\,\C[x]$ (to which
the usual Bergman expansion applies), missing
the extra functions of $\sqrt x=z$ that the orbifold sees. 

Instead we use the nontrivial \emph{orbifold line bundle} given by the 
trivial line bundle upstairs with nontrivial $\ZZ/2$-action
(acting as $-1$ on the trivialisation). This has invariant sections $\sqrt x\,\C[x]=z\C[z^2]$,
while its square has trivial $\ZZ/2$-action and has sections $\C[x]=\C[z^2]$
as above. Therefore, the sections of its powers generate the entire ring of functions
$\C[\sqrt x]=\C[z]$ upstairs, and see the full orbifold structure.

\begin{defn} \cite{ross_thomas:stabil_orbif} \label{defn:locallyample}
An orbifold line bundle $L$ over a cyclic orbifold $X$ is \emph{locally ample}
if in an orbifold chart around $x\in X$, the stabiliser group acts
faithfully on the line $L_x$.  We say $L$ is \emph{orbi-ample} if it is both locally
ample and globally positive. (That is, $L^{\ord(X)}$ is ample in the usual sense when
thought of as a line bundle on the underlying space of $X$; by the Kodaira-Baily embedding
theorem \cite{baily:57:imbed_v_manif_in_projec_space} one can equivalently ask that $L$
admits a hermitian metric with positive curvature.)
\end{defn}

In \cite[sections 2.5--2.6]{ross_thomas:stabil_orbif} this notion of ampleness is shown to be equivalent to the existence of an orbifold embedding
$$ (X,L) \hookrightarrow (\mathbb{WP},\O(1))$$
of $X$ into a weighted projective space $\mathbb W\mathbb P$. That is, the orbifold structure
of $X$ is pulled back from that of $\mathbb{WP}$, and $L$ is the pullback of the orbifold
hyperplane bundle $\mathcal O_{\mathbb{WP}}(1)$.

Just as in our example above, for any orbifold line bundle $L$ which has nontrivial stabiliser
action on the line $L_x$ over an orbifold point $x$, the sections of $L$
(i.e., the invariant
sections upstairs) all vanish at $x$. Therefore, for most powers $L^k$ of an orbi-ample
line bundle, the Bergman kernel $B_k$ will vanish at orbifold points.
Conversely, for
powers divisible by $\ord_x$, $B_k(x)$ will be nonzero, so in general $B_k(x)$ will have
some periodic behaviour in $k$ at orbifold points. The purpose
of this paper is to get a smooth global expansion by taking weighted
sums of Bergman kernels associated to various powers of $L$, flattening out the periodicity.
In the companion paper \cite{ross_thomas:stabil_orbif} we apply this to orbifold K\"ahler
metrics of constant scalar curvature and their relationship to stability. \smallskip

The Bergman kernel measures the local density of holomorphic
sections, so to get an expansion on an orbifold, we first
discuss the general local situation---the quotient of an open set in $\C^n$ by a linear action
of the cyclic group $\mathbb Z/m:=\mathbb Z/m\mathbb Z$. The reader will not lose much by considering
only the $\C/(\mathbb Z/2)$ example above, with the orbifold line bundle with
trivialisation which has weight $\pm1$ under the $\mathbb Z/2$-action.

By the local ampleness condition, there exists an identification between $\mathbb Z/m$ and
the group of $m$th roots of unity with respect to which there exists a local equivariant
(not invariant!) trivialisation of $L$ of weight $-1$. Then, with respect to this
trivialisation, sections of $L^k$ downstairs, i.e., invariant sections upstairs on $\C^n$,
are the same thing as functions upstairs of $\ZZ/m$-weight $k$ mod $m$. (In
particular,
they vanish at the origin unless $k\equiv 0$ mod $m$.)

We consider the weighted Bergman kernel
\begin{equation}\label{eq:defBorb}
B^{\orb}_k:=\sum_{i} c_i B_{k+i}\,, 
\end{equation}
where $c_i$ are positive constants and $i$ runs over some fixed finite index set of nonnegative
integers.  This is the global expression downstairs on the quotient; upstairs locally
we are taking a similar expression, but only include the sections of $L^{k+i}$ that have
$\ZZ/m$-weight $0$ mod $m$ (or, using the trivialisation, the functions of weight
$k+i$ mod $m$).

Since functions of nonzero weight vanish at the orbifold point, the ordinary
Bergman kernel at the origin upstairs is equal to the sum of the $|s_\alpha|^2$ over
an orthonormal basis of \emph{only the invariant sections}; i.e., it equals
the ``downstairs'' Bergman kernel $B_{k+i}$ at the origin. (Using the
trivialisation, this corresponds to summing over only the functions of weight $k+i$.)
Therefore, by
\eqref{eq:manifoldexpansion} applied to the ordinary upstairs Bergman kernel at the
origin, the downstairs kernel $B_{k+i}(0)$ has the asymptotic expansion for fixed $i$ and as $k$ tends to infinity
$$
(k+i)^n +\frac{1}{2}\Scal(\omega)(k+i)^{n-1} + \cdots.
$$
Summing over $i$, we find that in \eqref{eq:defBorb} the functions upstairs of $\ZZ/m$-weight
$u$ contribute, to leading order in $k$, 
\begin{equation} \label{contrib}
\sumskipp{i}{u-k}{c_i k^n}
\end{equation}
to $B_k^{\orb}(0)$. (Here and throughout the paper, the subscript means that the sum is
taken over all $i$ equal to $u-k$ mod $m$.) 

For instance, in our $\C/(\ZZ/2)$ example above with the nontrivial orbifold line
bundle, the functions of $\ZZ/2$-weight $0$ (respectively $1$) mod $2$ contribute
only to the terms $B_{k+i}(0)$ with $k+i$ even (respectively odd).
To even things out and ensure that $B_k^{\orb}$ has a smooth global
expansion, we want
to use \emph{all} local functions of all $\ZZ/m$-weights equally (thus returning us
to something close to the original Bergman kernel \emph{upstairs}). So we
choose the $c_i$ so that the functions of each $\ZZ/m$-weight contribute to the sum
\eqref{eq:defBorb} with the same coefficient.

We therefore want for each $k$ for \eqref{contrib} to be equal for all $u$,
i.e.,
$$\sumskipp{i}{u}{c_i} \quad\text{is independent of }u.$$
More generally, if $N\le n$ and all of the coefficients of $k^n,\ldots,k^{n-N}$ in
\begin{equation}\label{eq:starstar}
\sum_{i} c_i (k+i)^n      
\end{equation}
have equal contributions in each weight (mod $m$), i.e.,
\begin{equation}\label{eq:starstarstar}
\sumskipp{i}{u}{i^p c_i} \quad \text{is independent of } u \text{ for }
p=0,\ldots,N,
\end{equation}
then the same will be true of each
$\sum_{i} c_i (k+i)^{n-j}$ for $j=0,\ldots, N$,  and  $B_k^{\orb}$ will
admit an asymptotic expansion in $k$ of order $N$. In fact, we impose a slightly stronger condition than
\eqref{eq:starstarstar} in order to get the expansion at the $C^r$ level.


\begin{thm}\label{thm:mainexpansion}
  Let $(X,\omega)$ be a compact $n$-dimensional K\"ahler orbifold with
  cyclic quotient singularities, and $L$ be an orbi-ample line bundle on $X$ equipped with a hermitian metric whose curvature form is $-2\pi i\omega$.     Fix $N,r\ge 0$ and suppose $c_i$ are a finite number of positive constants chosen so that if $X$ has an orbifold point of order $m$, then
\begin{eqnarray}\label{eq:conditiononalpha}
\frac{1}{m}\sum_{i} i^p c_i &=& \mathop{\sum {i^pc_i}}_{i \equiv u \text{ mod }m}\quad \text{for all } u\text{ and  } p=0,\ldots,N+r.
\end{eqnarray}
 Then  the function 
$$ B^{\orb}_k:=\sum_{i} c_i B_{k+i}$$
admits a global $C^{r}$-expansion of order $N$. That is, there exist smooth functions $b_0,\ldots,b_{N}$ on $X$ such that
\[ B^{\orb}_k  =  b_0 k^n + b_ 1k^{n-1} + \cdots + b_N k^{n-N} + O(k^{n-N-1}),\]
where the $O(k^{n-N-1})$ term is to be taken in the $C^{r}$-norm.
Furthermore, the  $b_j$ are universal polynomials in the constants $c_i$
and the derivatives of the K\"ahler metric $\omega$; in particular,
$$
  b_0 = \sum_{i} c_i\quad \text{ and }\quad b_1 = \sum_{i} c_i \left(ni + \frac{1}{2}\Scal(\omega)\right),
$$
where $\Scal(\omega)$ is the scalar curvature of $\omega$.
\end{thm}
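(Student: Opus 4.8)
The plan is to reduce everything to a local computation in orbifold charts and to relate the downstairs kernel $B_{k+i}$ to the ordinary (upstairs) Bergman kernel of a \emph{manifold}, to which the expansion \eqref{eq:manifoldexpansion} already applies. Cover $X$ by finitely many charts. Away from the orbifold locus the map from a chart $\widetilde U\subset\C^n$ to $X$ is a local isomorphism, each $B_{k+i}$ enjoys the smooth expansion \eqref{eq:manifoldexpansion}, and $B^{\orb}_k=\sum_i c_i B_{k+i}$ is a finite weighted sum; expanding $(k+i)^{n-\ell}$ in powers of $k$ and collecting terms gives a polynomial expansion of order $N$ whose coefficients are universal in the $c_i$ and the standard coefficients $\widehat b_\ell$ of \eqref{eq:manifoldexpansion}. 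Since $\widehat b_0=1$ and $\widehat b_1=\tfrac12\Scal(\omega)$, this already produces $b_0=\sum_i c_i$ and $b_1=\sum_i c_i(ni+\tfrac12\Scal(\omega))$, and these formulas will have to agree with the values obtained over the orbifold locus.

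The work is concentrated near an orbifold point of order $m$, where I would pass to the chart $\widetilde U\subset\C^n$ with its linear $\ZZ/m\ZZ$-action and the equivariant weight $-1$ trivialisation of $L$ supplied by Definition \ref{defn:locallyample}. Writing $K_{k+i}(z,w)$ for the full upstairs Bergman kernel (built from all holomorphic sections of $\widetilde L^{k+i}$), the invariant sections that compute $B_{k+i}$ downstairs are cut out by group-averaging, giving a decomposition of the form
\begin{equation}
B_{k+i}(z)=\widetilde B_{k+i}(z)+\sum_{g\ne e}\xi^{-a_g(k+i)}\,\Phi_{g,k+i}(z), \label{eq:locdecomp}
\end{equation}
where $\widetilde B_{k+i}(z)=K_{k+i}(z,z)$ is the ordinary diagonal kernel upstairs, $\xi$ is a primitive $m$th root of unity, $a_g\in\{1,\dots,m-1\}$ records the weight by which $g$ acts on the trivialisation (these exhaust $\{1,\dots,m-1\}$ since the action is faithful), and $\Phi_{g,k+i}(z)$ is an off-diagonal term governed by $K_{k+i}(z,g^{-1}z)$. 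The main term satisfies \eqref{eq:manifoldexpansion} upstairs with invariant coefficients $\widehat b_\ell$, so $\sum_i c_i\widetilde B_{k+i}$ contributes exactly the smooth global expansion with the $b_j$ above; everything else must be shown to be negligible. Establishing \eqref{eq:locdecomp} together with a uniform near-diagonal expansion of $\Phi_{g,k+i}$ — in which, after rescaling $z\mapsto\sqrt{k}\,z$, the term concentrates on the scale $|z|\lesssim k^{-1/2}$ and decays like a Gaussian off the fixed locus — is the main obstacle, and is where I would lean on the off-diagonal Bergman asymptotics of Dai--Liu--Ma and Ma--Marinescu. Note that for $z$ bounded away from the fixed locus one has $g^{-1}z\ne z$, so each $\Phi_{g,k+i}(z)=O(k^{-\infty})$ and \eqref{eq:locdecomp} recovers the calculation of the previous paragraph, reconciling the two regions.

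It remains to kill the oscillatory sum in \eqref{eq:locdecomp} after weighting by $c_i$. The crucial feature is that the $k$-dependence of the phase is the clean factor $\xi^{-a_g(k+i)}$, so
\[
\sum_i c_i\,\xi^{-a_g(k+i)}\Phi_{g,k+i}(z)=\xi^{-a_g k}\sum_i c_i\,\xi^{-a_g i}\,\Phi_{g,k+i}(z).
\]
Because the index set is finite and $k\to\infty$, I would Taylor expand the profile in $i$ about $i=0$, writing $\Phi_{g,k+i}=\sum_{q}\tfrac{i^q}{q!}\,\partial_\kappa^q\Phi_{g,\kappa}\big|_{\kappa=k}$, whose coefficients satisfy $\partial_\kappa^q\Phi_{g,\kappa}=O(\kappa^{\,n-q})$, with each spatial derivative of the concentrated profile costing a further factor $\kappa^{1/2}$. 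The weighting then factors through the moments $\sum_i c_i\,\xi^{-a_g i}i^q$, and a one-line discrete Fourier transform shows that condition \eqref{eq:conditiononalpha} is exactly equivalent to the vanishing of these moments for every nontrivial $g$ and every $q=0,\dots,N+r$. Hence the first $N+r+1$ Taylor terms drop out, and the surviving $q\ge N+r+1$ terms are $O(k^{\,n-N-1})$ even in $C^r$, the extra $r$ orders of vanishing precisely absorbing the $k^{r/2}$ derivative losses. Summing over the finitely many $g$ and orbifold points, and patching with the non-orbifold region by a partition of unity, yields the global $C^r$-expansion of order $N$ with the stated $b_0$ and $b_1$; the higher $b_j$ are read off from the main term as universal polynomials in the $c_i$ and the derivatives of $\omega$.
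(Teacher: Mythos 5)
Your proposal is correct in its essential mechanism, and that mechanism coincides with the paper's: on an orbifold chart the group-averaged kernel splits into a diagonal part (the identity group element, which behaves like a manifold kernel and yields the stated $b_0,b_1$) plus off-diagonal parts indexed by the nontrivial group elements, each carrying a phase $\sigma^{k+i}$ with $\sigma\neq1$ an $m$-th root of unity and a profile concentrated at scale $k^{-1/2}$ around the fixed locus; condition \eqref{eq:conditiononalpha}, recast as the vanishing of the twisted moments $\sum_i c_i i^q\sigma^i$ for $q\le N+r$, kills these terms to the required order. Your ``one-line discrete Fourier transform'' is exactly Lemma \ref{lem:lemmaonconstants}, and your Taylor expansion of the profile in $i$ is the same cancellation the paper implements by factoring $(\eta-1)^{N+r-l+1}$ out of $\sum_i c_i i^l\sigma^i\eta^i$ and using $(\eta-1)^s\eta^k=O(k^{-s})$. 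Where you genuinely differ is the analytic input: you import the off-diagonal orbifold Bergman asymptotics of Dai--Liu--Ma and Ma--Marinescu as a black box, whereas the paper constructs everything from the Berman--Berndtsson--Sj\"ostrand local reproducing kernels $\K_k=(k^n+\tilde b_1k^{n-1}+\cdots)e^{k\psi}$, averages them over the group (Lemma \ref{lem:invariantreproducingkernel}), and passes from the local model to the true global kernel by a H\"ormander $\bar\partial$-estimate (Proposition \ref{prop:global}) plus Cauchy estimates for the $C^r$ statement. The paper's route is self-contained and hands you an explicit model depending smoothly on $k$; yours is shorter provided the cited expansions are available in the uniform $C^r$ form you need.

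Two points in your sketch need repair before it is a proof. First, your decomposition is phrased in terms of ``the full upstairs Bergman kernel built from all holomorphic sections of $\widetilde L^{k+i}$'' on a chart; for a general orbifold (not a global quotient) no such object exists, since the chart is only local. The kernel $K_{k+i}(z,w)$ in your formula must be a \emph{local} model kernel, and one must then prove that the actual global $B_{k+i}$ agrees with the averaged model up to $O(k^{n-N-r-1})$ uniformly in $C^r$ --- this localization is real work (the content of Proposition \ref{prop:global}) and is not supplied by the partition of unity you invoke at the end. Second, Taylor expanding $\Phi_{g,k+i}$ in the integer parameter $i$ presupposes a profile depending smoothly on a continuous parameter $\kappa$ with $\partial_\kappa^q\Phi_{g,\kappa}=O(\kappa^{n-q})$ uniformly up to the fixed locus; this holds for the model $\kappa^n e^{\kappa(\psi(z,g^{-1}z)-\phi(z))}$ (one checks $\kappa^n\|z_1\|^{2q}e^{-\delta\kappa\|z_1\|^2}=O(\kappa^{n-q})$), but it is a property of the model, not of the literal kernel, so it can only be used after the localization step. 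Both issues are fixable, either by adopting the paper's framework or by quoting the Dai--Liu--Ma orbifold theorem in full strength, but as written they are the gaps between your outline and a complete argument.
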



\begin{rmk}
If condition \eqref{eq:conditiononalpha} holds for some $m$, then it also holds if $m$ is replaced by any factor;  hence for the theorem it is sufficient to assume it holds when $m=\ord(X)$ is the lowest common multiple of the orders of the stabiliser groups
of all points of $X$.  So in particular, the  $N=r=0$ case yields a top-order $C^0$-expansion for $B_k^{\orb}$ when $c_i=1$ for $i=1,\ldots,\ord(X)$ and $c_i=0$ otherwise.   It turns out (Lemma \ref{lem:lemmaonconstants}) that \eqref{eq:conditiononalpha} is equivalent to asking that the function $\sum_i c_i z^i$ has a root of order $N+r+1$ at each nontrivial $m$-th root of unity. Thus there is little loss in assuming the $c_i$ are defined by
\[\sum_{i}c_i z^i :=  (z^{m-1} + z^{m-2} + \cdots + 1)^{N+r+1}.\]
In fact, the condition \eqref{eq:conditiononalpha} is natural in the sense that
it is also necessary for the existence of an expansion; see Remark
\ref{rmk:necessityofconstants}.    Also, if $X$ is a manifold, then there is no condition on the $c_i$, and the theorem is equivalent to the expansion \eqref{eq:manifoldexpansion}
for manifolds stated above.
\end{rmk}


\begin{rmk}
By a $C^{r}$-expansion of order $N$ we mean there is a constant $C_{r,N}$ such that
$$ \left\|  B_k^{\orb} - \left(b_0 k^n + b_1 k^{n-1} + \dots + b_N k^{n-N}\right)\right\|_{C^{r}} \le C_{r,N} k^{n-N-1},$$
 where the norm $\|f\|_{C^{r}}$ is the sum of the supremum norms of $f$ and
 its first $r$ derivatives over all of $X$ in the orbifold sense (i.e., we measure the norm of derivatives using the metric upstairs on orbifold charts).    The theorem generalises to hermitian metrics $h$ on $L$ with curvature form $-2\pi i\omega_h$ not necessarily equal to $\omega$, the only change being that the coefficients in the expansion become 
\cite[Theorem 4.1.3, Remark 5.4.13]{ma_marinescu:07:holom_morse_inequal_bergm_kernel}
 \begin{eqnarray}
   b_0 &=&\frac{\omega_h^n}{\omega^n} \sum_{i} c_i,\nonumber \\
 b_1 &=& \frac{\omega_h^n}{\omega^n}\sum_{i}c_i[ni + \trace_{\omega_h}(\Ric(\omega))-\frac{1}{2}\Scal(\omega_h)].
\end{eqnarray}
Moreover, the constants $C_{r,N}$ can be taken to be uniform as $(h,\omega)$ runs over a compact set.
\end{rmk}


The strategy of our proof is to use known results on manifolds.  For global quotients one can obtain what we want rather easily by averaging the expansion upstairs. To apply
this to general orbifolds, we need to be able to work locally, which is what the approach
of Berman-Berndtsson-Sj\"ostrand \cite{berman_berndtsson_sjoestrand:08:direc_approac_to_bergm_kernel}
to the expansion \eqref{eq:manifoldexpansion} allows us to do. This starts by proving a local expansion for the Bergman kernel, and then uses the H\"ormander estimate to pass to a global expansion.  In Section \ref{sec:local} we average these local expansions to get a local orbifold expansion, and use it in Section \ref{sec:global} to get a  global expansion in much the same way as for manifolds. 
\begin{rmk}\label{rmk:DLM}
There are several approaches to the analysis that one may take to prove the above result. For example, once one arranges the Bergman kernels in the 
combination of Theorem 1.7 the main result follows quickly
from the precise control of the singularity of the Bergman kernel expansion over the orbifold locus proved in
\cite[(5.25)]{dai-liu-ma(06):asymp_expansion_bergman_kernel}; see
\cite{xianzhe:11:remar_weigh_bergm_kernel_orbif} for details.
\end{rmk}
\smallskip

One consequence is a Riemann-Roch expression for orbi-ample line bundles obtained
by integrating the expansion over  $X$.
The difference between this and the general Kawasaki-Riemann-Roch formula for orbifolds
\cite{kawasaki:79:rieman_roch_theor_for_compl_v_manif} is that the periodic term coming
from the orbifold singularities vanishes due to the weighted sum over $i$.  We denote the orbifold canonical bundle by $K_{\orb}$  (defined, for instance, in \cite[section 2.4]{ross_thomas:stabil_orbif}).
\begin{cor}\label{cor:rroch}
Let $X$ be an $n$-dimensional orbifold with cyclic quotient singularities and $L$ be an orbi-ample line bundle.   Let $m=\ord(X)$ and suppose
  $$
\frac{1}{m}\sum_{i} c_i = \mathop{\sum {c_i}}_{i \equiv u \text{ mod m}} \quad \text{ and }\,\,\quad
\frac{1}{m}\sum_{i} ic_i = \mathop{\sum {ic_i}}_{i \equiv u \text{ mod m}} \quad \text{ for all  } u.
$$
Then 
\[ \sum_i c_i h^0(L^{k+i}) =  a_0 k^n + a_1 k^{n-1} + O(k^{n-2})  \]
with
\begin{eqnarray*}
 a_0 &=& \frac{\sum_i c_i}{n!} \int_X c_1(L)^n,    \\
 a_1 &=& \frac{\sum ic_i}{(n-1)!} \int_X c_1(L)^n - \frac{\sum_i c_i}{2(n-1)!} \int_X c_1(K_{\orb})c_1(L)^{n-1}.
\end{eqnarray*}
\end{cor}
\begin{proof}
  This follows from $\int_XB_k\frac{\omega^n}{n!}=h^0(L^k)$ and the fact that the integral of the scalar curvature of any orbifold K\"ahler metric is the topological quantity $-\frac{1}{(n-1)!}\int_X c_1(K_{\orb})c_1(L)^{n-1}$.
\end{proof}

As an indication of the connection with constant scalar curvature metrics,
we give an analogue of Donaldson's Theorem \cite[theorem 2]{donaldson(01):scalar_curvat_projec_embed} for orbifolds (for a more applicable generalisation, see \cite[theorem 5.1] {ross_thomas:stabil_orbif}).

\begin{cor}
Suppose that \eqref{eq:conditiononalpha} is satisfied for $N=1$ and $r=2$. For each $k\gg
0$, let $h_k$ be a hermitian metric on $L$ with curvature $-2\pi i\omega_k$.  Consider the
diagonal sequence of Bergman kernels $B_k^{\orb}$ associated to the metrics $(h_k,\omega_k)$
and the power $L^k$ of $L$, and suppose that for each $k$ this function is
constant over $X$.  If also $\omega_k$ converges in $C^2$ to a K\"ahler metric $\omega_{\infty}$, then $\Scal(\omega_{\infty})$
is constant.
\end{cor}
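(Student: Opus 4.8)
The plan is to extract the scalar curvature from the subleading term of the expansion in Theorem \ref{thm:mainexpansion} and use the assumed constancy of $B_k^{\orb}$ to pin it down in the limit. Since each $h_k$ has curvature $2\pi\omega_k$ and \eqref{eq:conditiononalpha} holds with $N=1$, $r=2$, the theorem applied to $(h_k,\omega_k)$ and $L^k$ yields a $C^2$- (in particular $C^0$-) expansion
$$ B_k^{\orb} = b_0\,k^n + b_1(\omega_k)\,k^{n-1} + O(k^{n-2}),$$
where $b_0 = \sum_i c_i$ and $b_1(\omega_k) = n\sum_i ic_i + \tfrac12\big(\sum_i c_i\big)\Scal(\omega_k)$. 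The crucial structural feature is that $b_0$, and the first summand of $b_1(\omega_k)$, are constants on $X$ independent of $k$, so that the only part of the two leading terms varying over $X$ is the scalar curvature, appearing with the nonzero factor $\tfrac12\sum_i c_i$.

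First I would invoke the uniformity recorded in the Remark following Theorem \ref{thm:mainexpansion}: as $\omega_k \to \omega_\infty$ in $C^2$, the data $(h_k,\omega_k)$ range in a compact family, so the constant governing the $O(k^{n-2})$ remainder may be chosen independent of $k$. Then, using that $B_k^{\orb}$ is constant on $X$ for each $k$, I would subtract the constant $b_0 k^n$ and divide by $k^{n-1}$ to obtain
$$ b_1(\omega_k) = \mathrm{const}_k + O(k^{-1})$$
uniformly in $C^0$, where $\mathrm{const}_k$ denotes the ($x$-independent) value on the left. Subtracting the constant $n\sum_i ic_i$ and dividing by $\tfrac12\sum_i c_i > 0$ turns this into a uniform $C^0$ estimate saying that $\Scal(\omega_k)$ differs from its average over $X$ by $O(k^{-1})$.

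Finally I would let $k\to\infty$. Because scalar curvature depends on at most two derivatives of the metric, the hypothesis $\omega_k \to \omega_\infty$ in $C^2$ gives $\Scal(\omega_k)\to\Scal(\omega_\infty)$ in $C^0$, and the averages converge correspondingly; passing to the limit in the previous estimate annihilates the $O(k^{-1})$ term and shows $\Scal(\omega_\infty)$ equals its own average, i.e.\ is constant. The main obstacle is precisely the uniformity of the remainder as the metric varies: without the compactness clause of the Remark the bound $O(k^{-1})$ could degrade along the sequence and the final limit would not be justified; once uniformity is in hand the rest is bookkeeping with the explicit coefficients $b_0$ and $b_1$.
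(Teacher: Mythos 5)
Your argument is correct and takes essentially the same route as the paper: apply Theorem \ref{thm:mainexpansion} to $(h_k,\omega_k)$ with the $O(k^{n-2})$ remainder uniform over the compact family of metrics, use the constancy of $B_k^{\orb}$ (and the constancy of $b_0$) to conclude that $b_1$, hence $\Scal(\omega_k)$, is constant up to $O(k^{-1})$, and pass to the limit via the $C^2$-convergence. The only immaterial difference is that the paper identifies the limiting constant explicitly as $a_1$ divided by the volume using the Riemann--Roch Corollary \ref{cor:rroch}, whereas you compare $\Scal(\omega_k)$ with its own average; both versions hinge on exactly the same two points, namely the uniformity of the remainder and the cancellation of the $k^n$ terms.
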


\begin{proof}\newcommand{\volume}{\operatorname{vol}}
 Let $\volume: =\int_X \frac{c_1(L)^n}{n!}$ be the volume of $L$.  Notice
 that since the Bergman kernel is constant, by the previous corollary and
 integrating over $X$, we have $\volume B_k^{\orb}=  a_0 k^n + a_1 k^{n-1} + O(k^{n-2})$ with $a_i$ as above.   Hence, applying Theorem \ref{thm:mainexpansion} to the pair $(h_k,\omega_k)$,
\[\frac{1}{\volume} (a_0 k^n + a_1 k^{n-1} + O(k^{n-2})) = B_k^{\orb}=b_0k^n + b_1k^{n-1} + O(k^{n-2}),\] 
where the $O(k^{n-2})$ term is independent of $(h_k,\omega_k)$ since they
lie in a compact set.  Furthermore, $b_0 = \sum_i c_i=\frac{a_0}{\volume}$ and $b_1 = \sum_i c_i (ni + \frac{1}{2}\Scal(\omega_k))$.  Thus the $k^n$ terms cancel,  so $b_1$ tends to $a_1\volume^{-1}$ uniformly over $X$ as $k$ tends to infinity.  Since $\omega_k$ tends to $\omega_{\infty}$ in $C^2$, $\Scal(\omega_k)$ tends to $\Scal(\omega_{\infty})$  and thus $\Scal(\omega_{\infty})$ is constant.
\end{proof}


\subsection*{Interpretation using peaked sections}

Intuitively, one can understand what is going on as follows. On a manifold
$B_k(x)$ is the pointwise
norm square of a \emph{peaked section} $s_x$ at $x$ of unit $L^2$-norm. This
is glued from zero outside a small ball of radius $O(k^{-1/2})$
centred on $x$ and the standard local model (with Gaussian norm) inside the
ball. (If $s_x$ is constructed to have zero $L^2$-inner product with any
section vanishing at $x$, then the Bergman kernel at $x$ is \emph{precisely}
$|s_x(x)|^2$; if not, then this is still a good enough approximation for the
discussion here.)

To describe the orbifold case, consider the simple model consisting of a
single orbifold chart $\C\to\C/(\mathbb Z/m)$.  Suppose that $x$ is a smooth point, which in this model case is any point other than the origin.  Then for large $k$ we can pick a ball of radius $O(k^{-1/2})$ centred at $x$ and disjoint from the origin. It therefore admits a copy upstairs in $\C$, i.e., a ball around one of its preimages.   Then just as in the smooth case, there is a peaked section supported in this ball, and moving it around $\C$ by the group action gives an invariant section with
$m$ peaks. This can then be equivariantly glued to zero, giving an invariant section that has $L^2$-norm approximately $m$, but considered as a section downstairs it has norm $1$ (since the volume downstairs is defined to be the volume upstairs divided by the order of the chart). Therefore its pointwise norm squared at $x$ is approximately the value of the Bergman kernel.

Now instead consider an orbifold point, which here is the origin.  A section upstairs on $\C$, peaked about the origin, can be
equivariant \emph{only} when $k\equiv0$ mod $m$. Therefore for most $k$, the
Bergman kernel will have value zero at the origin. But when $k$ is a
multiple of $m$, we get an invariant section with $L^2$-norm 1 upstairs, and
therefore $L^2$-norm only $\frac1m$ when considered as an orbifold section
downstairs. Therefore we multiply the section by $\sqrt m$ to get one of
unit $L^2$-norm, and find that the Bergman kernel of the orbifold at the
origin is approximately $m$ times as big as at manifold points.

Thus when we sum the Bergman kernels over a period $L^k,L^{k+1},\ldots$ of
length at least $m$ we stand a chance of them averaging out to something
which is again approximately constant to top order in $k$. The condition
that they do is that the average of the coefficients $c_i$ with which we
make each $L^{k+i}$ contribute is equal to the sum of the coefficients $c_i$
which contribute at the origin (i.e., those for which $k+i\equiv 0$ mod $m$).
But this is precisely the $p=0$ condition in \eqref{eq:conditiononalpha}.   A similar analysis for $p=1$ means that we can ensure that we smooth out all of the
$i$-dependence of the orbifold Bergman kernel except the part coming from
scalar curvature.\medskip

\noindent\textbf{Conventions:} We refer to \cite{ross_thomas:stabil_orbif} for detailed
definitions
and conventions. For this paper we need only that an orbifold with cyclic stabiliser
groups is an analytic space covered by charts
of the form $U\to U/G\subset X$, where $U$ is an open set in $\mathbb C^n$ and $G$ is
a finite cyclic group acting effectively and \emph{linearly} on $U$; see for example
\cite[section 2.1]{ross_thomas:stabil_orbif}.
It is important for our applications that we allow orbifold structure in codimension
one.  Thus if $m\ge 2$, then $\mathbb C/(\mathbb Z/m)$ is considered a distinct
orbifold from $\mathbb C$ even though they have the same underlying space.
Quantities on $X$ (hermitian metrics, K\"ahler metrics, sections, supremum
norms, etc.) are always taken in the orbifold sense, as an invariant quantity upstairs in orbifold charts. An orbifold line bundle is an equivariant line bundle on orbifold charts; the
gluing condition is described in \cite{ross_thomas:stabil_orbif}. \smallskip

\subsection*{Acknowledgments}
We thank Florin Ambro, Bo Berndtsson, and Yan\-ir Rubenstein for helpful conversations.
We also wish to acknowledge our debt to the paper
\cite{berman_berndtsson_sjoestrand:08:direc_approac_to_bergm_kernel}, which
allowed for a significant improvement of ours.  JR received support from NSF Grant DMS-0700419 and Marie Curie Grant PIRG-GA-2008-230920. RT held a Royal Society University Research Fellowship while this work was carried out.

\chapter{Bergman Kernels}\label{sec:bergman}

From now on, let $(X,\omega)$ be a compact $n$-dimensional K\"ahler
orbifold and $L$ be an orbifold line bundle with a hermitian metric $h$ whose curvature form is $-2\pi i\omega$.  By abuse of notation, we denote the induced metric on $L^k$ also by $h$, which along with the volume form defined by $\omega$ yields an $L^2$-inner product on $H^0(L^k)$ given by

\[ (s,t)_{L^2} = \int_X  (s,t)_{h} \frac{\omega^n}{n!} \quad \text{for }s,t\in H^0(L^k).\]
The Bergman kernel of $L^k$ is  defined to be
$B_k(x) = \sum_{\alpha}  |s_\alpha(x)|_h^2$,
where $\{s_\alpha\}$ is any $L^2$-orthonormal basis for $H^0(L^k)$.      Recall that the \emph{reproducing kernel} of $L^k$ is the section $K=K_k$ of $L^k\boxtimes \bar{L}^k$ on $X\times X$ given by
\[ K(y,x) = \sum_{\alpha} s_{\alpha}(y)\boxtimes \overline{s}_{\alpha}(x),\]
 and therefore \[B_k(x) = |K(x,x)|_{h}.\]
More invariantly, for $x\in X$  let $K_x$ be the section of $L^k\otimes \bar{L}_x^k$ on $X$ given by $K_x(y):=K(y,x)$, so that $(s,K_x)_{L^2}$ is an element of the line
$L_x^k$ for any section $s$ of $L^k$.  Then the defining property of the reproducing kernel is that this should equal $s(x)\in L_x^k$:
\[ s(x) = (s,K_x)_{L^2} \quad \text{ for all } s\in H^0(L^k).\]

We base our discussion of the functions $B_k$ on the work
of \cite{berman_berndtsson_sjoestrand:08:direc_approac_to_bergm_kernel} that starts by constructing a local expansion, as we describe next. For other approaches to Bergman kernels, see for example the book
\cite{ma_marinescu:07:holom_morse_inequal_bergm_kernel}.   \smallskip

Let $U$ be an open ball in $\mathbb C^n$ centred at the origin, and $\phi\colon U\to\R$ be smooth and
strictly plurisubharmonic. Setting $\omega= \frac{i}{2\pi}\partial\bar{\partial} \phi$, define an inner product on the space of smooth functions by
\[ (u,v)^2_{\phi} = \int_{U} u\bar{v}\, e^{-\phi} \frac{\omega^n}{n!},\]
and let $H(U)_{\phi}$ be the space of holomorphic functions of finite norm.  (The reader should of course think of this situation as arising when $U$ is a chart of a manifold
over which the hermitian line bundle $L|_U$ has curvature $-2\pi i\omega$ and a holomorphic
trivialisation of norm $e^{-\phi/2}$.) Let $\chi$ be a smooth cutoff function supported on $U$  that takes the value $1$ on $\frac{1}{2}U$.

\begin{defn}
We say that a sequence $\K_k,\ k\gg0,$ of smooth functions on $U\times U$ are \emph{local reproducing kernels} mod $O(k^{-N-1})$ for $H(U)_{k\phi}$ if there is a neighbourhood $U_0\subset U$ of the origin such that for any $x\in U_0$ and any  $u\in H(U)_{k\phi}$,
\[ u(x) = (\chi u,\K_{k,x})_{k\phi} + O(k^{-N-1}e^{k\phi(x)/2}) \|u\|_{k\phi},\]
where $\K_{k,x}(y):= \K_k(y,x)$, and the $O(k^{-N-1}e^{k\phi(x)/2})$ term is uniform on $U_0$. 
\end{defn}

So whereas the reproducing kernels $K_k$ are globally and uniquely defined, the local kernels $\K_{k}$ are far from unique.  The next theorem exhibits preferred ones, which will be shown later to be local approximations to the global kernels.\medskip

We need the notion of a symmetric almost sesqui-holomorphic extension $\psi\colon U\times U\to\C$ of $\phi$. That is, $\psi(x,y)=\overline{\psi(y,x)},\ \psi(x,x)=\phi(x),$
and $\bar{\del} (\psi(x,\bar{y}))$ vanishes to all orders on $\{x=y\}$,
i.e.,
$D^{\alpha} (\bar{\del}( \psi(x,\bar{y})))|_{\{x=y\}}=0$ for all $\alpha$.
(This notion is  discussed in \cite[section 2.6]{berman_berndtsson_sjoestrand:08:direc_approac_to_bergm_kernel} and \cite[section 3.3.3.1]{rubenstein:08:geomet_quant_dynam_const_space_metric};  if $\phi$ is analytic, then one can take $\psi$ to be holomorphic in the first variable and anti-holomorphic in the second.)    By shrinking $U$ if necessary we can ensure there is a $\delta>0$ such that
\begin{equation} \label{refpsi}
\Re (\psi(x,y)) \le  \phi(x)/2 + \phi(y)/2 -\delta \|x-y\|^2
\end{equation}
for $x,y\in U$ \cite[equation 2.7]{berman_berndtsson_sjoestrand:08:direc_approac_to_bergm_kernel}, \cite[lemma 3.8]{rubenstein:08:geomet_quant_dynam_const_space_metric}.  We write $f = O(k^{-\infty})$ to mean $f = O(k^{-M})$ for any $M$.

\begin{thm}[Berman-Berndtsson-Sj\"ostrand]\label{thm:BBS}
For fixed $N,r\ge 0$ there exist smooth functions $\tilde{b}_j$ defined on $U\times U$ such that
\[\K_k(y,x)= \left(k^n + \tilde{b}_1(y,x) k^{n-1} + \dots + \tilde{b}_{N+r}(y,x)k^{n-N-r}\right)e^{k\psi(y,x)}\]
is a local reproducing kernel mod $O(k^{-N-r-1})$ for $H(U)_{k\phi}$.    Each $\tilde{b}_j$ can be written as a polynomial in the derivatives $\del_x^{\alpha}\bar{\del}^{\beta}_y\psi(x,y)$, and in particular $\tilde{b}_1(x,x) = \frac{1}{2}\Scal(\omega)$.

Moreover, if $D_{x,y}$ is a differential
operator of any order in $x$ and $y$, then  $e^{-k(\phi(x)/2+\phi(y)/2)} D_{x,y}\bar{\partial}_x
\mathcal K_k$ and  $e^{-k(\phi(x)/2+\phi(y)/2)} D_{x,y} \partial_y \mathcal K_k$
are both $O(k^{-\infty})$. 

\end{thm}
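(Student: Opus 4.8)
The plan is to construct $\K_k$ directly by a WKB-type ansatz and to pin down its amplitude by a Gaussian (Laplace) computation, following \cite{berman_berndtsson_sjoestrand:08:direc_approac_to_bergm_kernel}. I write $\K_k(y,x)=A_k(y,x)\,e^{k\psi(y,x)}$ with $A_k=k^n+\tilde b_1k^{n-1}+\dots+\tilde b_{N+r}k^{n-N-r}$ and treat the $\tilde b_j$ as unknowns. For holomorphic $u\in H(U)_{k\phi}$ and $x\in U_0$, using $\overline{\psi(y,x)}=\psi(x,y)$, the quantity to analyse is the oscillatory integral
\[
(\chi u,\K_{k,x})_{k\phi}=\int_U \chi(y)\,u(y)\,\overline{A_k(y,x)}\,e^{k(\psi(x,y)-\phi(y))}\,\frac{\omega^n(y)}{n!},
\]
and I must choose the $\tilde b_j$ so that this equals $u(x)+O(k^{-N-r-1}e^{k\phi(x)/2})\|u\|_{k\phi}$, uniformly on $U_0$.

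First I would localise. Inequality \eqref{refpsi} gives $\Re(\psi(x,y)-\phi(y))\le \tfrac12\phi(x)-\tfrac12\phi(y)-\delta\|x-y\|^2$, so the integrand is dominated by $|A_k|\,e^{k\phi(x)/2}\big(|u(y)|e^{-k\phi(y)/2}\big)e^{-k\delta\|x-y\|^2}$; by Cauchy--Schwarz the part of the integral with $\|x-y\|\ge\sqrt{(\log k)/k}$ is $O(k^{-\infty}e^{k\phi(x)/2})\|u\|_{k\phi}$ uniformly, and on the complementary region the cutoff $\chi$ equals $1$. On this $k^{-1/2}$-neighbourhood of the diagonal I rescale $y=x+t/\sqrt k$ and Taylor expand. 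The Hermitian quadratic part of the exponent is the negative-definite form $-\langle(\partial\bar\partial\phi)(x)\,t,t\rangle$, giving a Gaussian weight; the holomorphy of $u$ together with the almost-anti-holomorphy of $\psi$ in $y$ means that at leading order the integrand reduces (after the anti-holomorphic-in-$t$ linear terms cancel between $\psi$ and $\phi$) to a holomorphic function of $t$, which integrates against the Gaussian to its value at $t=0$. Matching the leading amplitude $k^n$ against the volume normalisation $\omega^n/n!\sim\det(\partial\bar\partial\phi)$, the integral collapses to $u(x)$, while the subleading moments — fed by the higher, non-holomorphic Taylor terms of $\psi$ and $\phi$ — produce an asymptotic series $\sum_{l\ge0}k^{-l}L_l[u](x)$ in which each $L_l$ is a differential operator with coefficients polynomial in the jets $\del_x^{\alpha}\bar{\del}_y^{\beta}\psi$ at the diagonal.

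Requiring this series to reproduce $u(x)$ through order $k^{-N-r-1}$ yields, order by order in $k^{-1}$, a triangular linear system in which $\tilde b_j$ enters the $k^{-j}$ equation with an invertible leading coefficient; solving recursively exhibits each $\tilde b_j$ as the asserted polynomial in $\del_x^{\alpha}\bar{\del}_y^{\beta}\psi$. Carrying the expansion to second order along the diagonal and computing in normal coordinates for $\omega=\tfrac{i}{2\pi}\partial\bar\partial\phi$ identifies the first subleading moment with half the scalar curvature, giving $\tilde b_1(x,x)=\tfrac12\Scal(\omega)$, exactly as in the manifold computation \cite{lu:00:lower_order_terms_asymp_expan}.

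For the final assertion I note that the entire $k$-growth and exponential size of $\K_k$ sits in $e^{k\psi(y,x)}$, and that $\K_k$ is by construction almost holomorphic in $y$ and almost anti-holomorphic in $x$: both $\psi$ and the coefficients $\tilde b_j$ (whose off-diagonal values are a free choice of smooth extension) are taken almost sesqui-holomorphic, so that the relevant $\bar\partial$- or $\partial$-derivatives of $\psi$ and of $\tilde b_j$ vanish to infinite order on $\{x=y\}$ by the defining property $D^\alpha(\bar\del(\psi(x,\bar y)))|_{\{x=y\}}=0$. Hence the operators in the final estimate differentiate $\K_k$ against its almost-holomorphic structure, bringing down a factor bounded by $C_M\|x-y\|^M$ for every $M$; combined with the off-diagonal decay $e^{k\Re\psi}\le e^{k(\phi(x)+\phi(y))/2}e^{-k\delta\|x-y\|^2}$, the product $k^{n+1}\|x-y\|^Me^{-k\delta\|x-y\|^2}$ is $O(k^{-\infty})$ uniformly after normalising by $e^{k(\phi(x)/2+\phi(y)/2)}$, and each additional derivative from $D_{x,y}$ costs only a bounded power of $k$ that is reabsorbed. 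The hard part throughout is the \emph{uniform} control of the Laplace remainder: one must bound the tail of the expansion in terms of $\|u\|_{k\phi}$ with constants independent of $x\in U_0$, and repeatedly exploit the infinite-order vanishing of $\bar\del\psi$ on the diagonal to discard the errors created by $\psi$ being only almost sesqui-holomorphic.
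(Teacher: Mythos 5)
First, be aware that the paper does not actually prove this theorem: its ``proof'' is a citation to Propositions 2.5 and 2.7 of Berman--Berndtsson--Sj\"ostrand, together with a remark about rescaling the ball and a factor of $\pi^{-n}$ from conventions. So you are not taking a different route from the paper --- you are attempting to reprove the cited result. Your outline does capture the overall shape of that proof: localise near the diagonal using \eqref{refpsi}, rescale by $\sqrt k$, exploit the Bargmann--Fock reproducing property of the Gaussian, determine the $\tilde b_j$ from a triangular system, and deduce the final assertion from the infinite-order vanishing of the wrong-type derivatives of $\psi$ and $\tilde b_j$ on the diagonal combined with the off-diagonal Gaussian decay (that last argument is essentially correct as you state it, and is exactly how the property is used later in Proposition \ref{prop:global}).

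As a self-contained proof, however, there are genuine gaps. A minor one: with the cutoff at $\|x-y\|\ge\sqrt{(\log k)/k}$ the damping factor is only $e^{-\delta k\cdot(\log k)/k}=k^{-\delta}$, which is not $O(k^{-\infty})$; you need radius $(\log k)/\sqrt k$. The serious one is precisely the point you defer to your last sentence: the remainder must be bounded by $\|u\|_{k\phi}$ alone, uniformly in $u\in H(U)_{k\phi}$ and in $x\in U_0$. In your Laplace expansion the subleading corrections pair anti-holomorphic powers of $t=\sqrt k(y-x)$ against derivatives $\del^\beta u(x)$, and the only bound available from the hypothesis --- a Cauchy estimate on a ball of radius $O(k^{-1/2})$ (larger balls destroy the comparison with $e^{-k\phi(x)/2}$) combined with the sub-mean-value inequality --- gives $|\del^\beta u(x)|e^{-k\phi(x)/2}\lesssim k^{n/2+|\beta|/2}\|u\|_{k\phi}$, which exactly cancels the factor $k^{-|\beta|/2}$ gained from the rescaling. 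The same problem afflicts the Taylor remainder of $u$. So the naive expansion does not visibly produce a decreasing series, and overcoming this is the real content of the cited propositions: Berman--Berndtsson--Sj\"ostrand avoid expanding $u$ altogether, constructing the amplitude by an integration-by-parts/symbol calculus in which $u$ only ever appears undifferentiated, paired against an exact $\bar\del$. Without that device (or an equivalent one), your order-by-order determination of the $\tilde b_j$ and the uniform $O(k^{-N-r-1})$ error are asserted rather than established.
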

\begin{proof}
 This is \cite[proposition  2.5 and section 2.4]{berman_berndtsson_sjoestrand:08:direc_approac_to_bergm_kernel}
 when $\phi$ is analytic, and generalised to the smooth case in \emph{loc.\ cit.} proposition
 2.7.
 (The cited work is stated in the case that $U$ is a ball of radius $1$, but
 the case for general radius follows immediately on rescaling; there is also an extra factor of $\pi^{-n}$ due to differences in conventions for the curvature form.)
\end{proof}

Observe that if a finite group $G$ acts on $U$ preserving $\phi$, then we can take $\psi$
and $\chi$ to be $G$-invariant, in which case $\mathcal K_k(\zeta x,\zeta x) = \mathcal K_k(x,x)$ for all $\zeta \in G$.

\chapter{Local expansion}\label{sec:local}
We next extend the local expansion of the previous section to orbifold charts, which begins by averaging the local reproducing kernels $\mathcal K_k$.   Building on the notation of the previous section, suppose that a cyclic group $G$ of order $m$ acts linearly and faithfully on $U$  and that our plurisubharmonic function $\phi$ and cutoff function $\chi$ are invariant.  The inner product on the space of functions is replaced with
\[ (u,v)_{\phi,m} = \frac{1}{m}\int_{U} u\bar{v}\,  e^{-\phi}\frac{\omega^n}{n!},\]
and $H(U)_{\phi,m}$ is the space of holomorphic functions of finite norm.   Fix a generator $\zeta$ of $G$ and suppose we are given a character of $G$ which maps $\zeta$ to a primitive $m$-th root of unity $\lambda$.     We say a function $u$ on $U$ \emph{has weight $j$} if $u(\zeta x) = \lambda^j u(x)$ for all $x\in U$.     (The reader should  now think of $U$ as arising from an orbifold chart $U\to U/G$, and the character as describing the action of $G=\mathbb Z/m$ on $L|_U$ under some (noninvariant) trivialisation;  the condition that $\lambda$ is primitive comes from the local condition for orbi-ample line bundles,  and the sections of $L^k|_U$ now arise from functions on $U$ of weight $k$ mod $m$.)

Let $\K_k= (k^n + \tilde{b}_1 k^{n-1} + \cdots + \tilde{b}_{N+r} k^{n-N-r})e^{k\psi}$ be the preferred local reproducing kernel for $H(U)_{k\phi}$ from Theorem \ref{thm:BBS} and define
\begin{equation}
  \K_k^{\average} (y,x) := \frac{1}{m}\sum_{u,v=0}^{m-1} \lambda^{k(v-u)} \K_k(\zeta^u y,\zeta^v x).\label{eq:defofKav}
\end{equation}
One sees  that $\K_k^\average$ has weight $k$ in the first variable and weight $-k$ in the second variable (as indeed it must if it is to be a section of $L^k|_U\boxtimes \bar{L}^k|_U$). 


\begin{lem}\label{lem:invariantreproducingkernel}
  $\K_k^{\average}$ is a local reproducing kernel mod $O(k^{-N-r-1})$ for the subspace of $H(U)_{k\phi,m}$ consisting of functions that have weight $k$.  
\end{lem}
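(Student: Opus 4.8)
The plan is to deduce the averaged reproducing property directly from that of the unaveraged kernel $\K_k$ of Theorem~\ref{thm:BBS}, using only the $G$-invariance of $\phi$, of the cutoff $\chi$, and of the volume form $\omega^n/n!$. Since $G$ acts linearly we may first replace the neighbourhood $U_0$ supplied by Theorem~\ref{thm:BBS} with a smaller $G$-invariant one, so that the unaveraged reproducing property holds, uniformly, at every translate $\zeta^b x$ of a point $x\in U_0$. Fix such an $x$ and a weight-$k$ function $u\in H(U)_{k\phi,m}$; note that $u\in H(U)_{k\phi}$ as well, with $\|u\|_{k\phi}=\sqrt m\,\|u\|_{k\phi,m}$.

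Writing $\K^{\average}_{k,x}(y):=\K^{\average}_k(y,x)$ as before and inserting the definition~\eqref{eq:defofKav} (with its summation indices renamed to $a,b$, to avoid a clash with the function $u$) into the pairing gives
\[ (\chi u,\K^{\average}_{k,x})_{k\phi,m}=\frac{1}{m^2}\sum_{a,b=0}^{m-1}\overline{\lambda^{k(b-a)}}\int_U \chi(y)\,u(y)\,\overline{\K_k(\zeta^a y,\zeta^b x)}\,e^{-k\phi(y)}\,\frac{\omega^n}{n!}. \]
In the $(a,b)$ integral I would substitute $y\mapsto\zeta^{-a}y$: invariance of $\chi$, $\phi$ and the volume form, together with $u(\zeta^{-a}y)=\lambda^{-ak}u(y)$, pulls out a factor $\lambda^{-ak}$ and leaves exactly $(\chi u,\K_{k,\zeta^b x})_{k\phi}$. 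By Theorem~\ref{thm:BBS} this equals $u(\zeta^b x)+O(k^{-N-r-1}e^{k\phi(x)/2})\|u\|_{k\phi}$, and since $u$ has weight $k$ and $\phi$ is invariant we have $u(\zeta^b x)=\lambda^{bk}u(x)$ and $e^{k\phi(\zeta^b x)/2}=e^{k\phi(x)/2}$. Hence the $(a,b)$ integral equals $\lambda^{(b-a)k}u(x)$ up to the stated error.

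It then remains to collect terms. The phase $\overline{\lambda^{k(b-a)}}\,\lambda^{(b-a)k}=|\lambda^{k(b-a)}|^2=1$ since $\lambda$ is a root of unity, so each of the $m^2$ main contributions equals $u(x)/m^2$ and they sum to $u(x)$; this cancellation is exactly where the weight-$k$ hypothesis matches the $\lambda^{k(v-u)}$ weighting in~\eqref{eq:defofKav} (for a function of weight $w\not\equiv k$ the corresponding phase sum $\sum_{a,b}\lambda^{(b-a)(w-k)}$ would instead vanish). The $m^2$ error terms are each $O(k^{-N-r-1}e^{k\phi(x)/2})\|u\|_{k\phi}$, uniformly over the $G$-invariant $U_0$, and rewriting $\|u\|_{k\phi}=\sqrt m\,\|u\|_{k\phi,m}$ keeps them of the required order, yielding $u(x)=(\chi u,\K^{\average}_{k,x})_{k\phi,m}+O(k^{-N-r-1}e^{k\phi(x)/2})\|u\|_{k\phi,m}$. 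I expect the only delicate point to be the uniformity bookkeeping that lets Theorem~\ref{thm:BBS} be invoked simultaneously at all the translated points $\zeta^b x$, which is precisely what the $G$-invariant choice of $U_0$ secures; the rest is the finite-group averaging above.
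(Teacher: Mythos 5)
Your proposal is correct and follows essentially the same route as the paper's proof: expand $\K^{\average}_{k,x}$ into its $m^2$ terms, apply the change of variables $y\mapsto\zeta^{-a}y$ together with the $G$-invariance of $\phi$, $\chi$ and the volume form to reduce each term to the unaveraged reproducing property at $\zeta^b x$, and observe that the weight-$k$ hypothesis makes all the phases cancel. The only differences are cosmetic (renaming the summation indices to avoid the clash with the function $u$, and spelling out the uniformity over the $G$-invariant $U_0$ and the $\sqrt m$ relation between the two norms, which the paper leaves implicit).
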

\begin{proof}
Let $u$ have weight $k$. Then by the change of variables  $y'=\zeta^u y$ and the invariance of $\phi$ and $\chi$ we have, up to terms of order $O(e^{k\phi(x)/2} k^{-N-r-1})\|u\|_{k\phi}$,
\begin{eqnarray*}
 \int_{U} \chi(y)  u(y)\overline{\lambda^{k(v-u)} \K_{k,\zeta^v x}(\zeta^u y)} e^{-k\phi(y)}\frac{\omega^n}{n!} &=& \lambda^{-kv} u(\zeta^v x) \\
&=& u(x),
\end{eqnarray*}
so
\begin{eqnarray*}
(\chi u,\K_{k,x}^{\average})_{k\phi,m} &=& \frac{1}{m} \int_{U} \chi(y)  u(y) \overline{\K_{k,x}^{\average}}(y) e^{-k\phi(y)}\frac{\omega^n}{n!} \\
&=& \frac{1}{m^2} \sum_{u,v=0}^{m-1} u(x) = u(x).
\end{eqnarray*}
\end{proof}

We show in the next section that downstairs on the orbifold, the Bergman kernel $B_k$ is locally approximated by $\K_k^{\average}(x,x)e^{-k\phi(x)}$, and thus the weighted Bergman kernel $B^{\orb}_k=\sum_i c_i B_{k+i}$  is approximated by
\[ \mathcal B_k^{\orb}(x):= \sum_{i} c_i  \K_{k+i}^{\average}(x,x) e^{-(k+i)\phi(x)},\]
where, we recall, the sum is over a fixed finite index of nonnegative integers $i$.

\begin{thm}\label{thm:localexpansion}
Suppose the $c_i$ satisfy
\begin{equation}
\frac{1}{m}\sum_{i} i^p c_i =\sumskip{i^p c_i}  \quad \text{for  } 0\le u\le m-1, 0\le p\le N+r.\label{eq:conditiononalphailocal}
\end{equation}
Then there is a local $C^0$-expansion of $\B_k^{\orb}$ of order $N+r$
\[ \B_k^{\orb}(x) = b_0(x)k^n + b_1(x) k^{n-1} + \cdots + b_{N+r}(x) k^{n-N-r} + O(k^{n-N-r-1})\]
on $U$.  Moreover, the $b_j$ depend  only on $c_i$ and the derivatives of
the metric; in particular, $b_0 = \sum_{i} c_i$ and $b_1 = \sum_{i} c_i (ni + \frac{1}{2}\Scal(\omega))$.
\end{thm}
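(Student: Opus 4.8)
The plan is to substitute the Berman--Berndtsson--Sj\"ostrand expansion of Theorem \ref{thm:BBS} into the definition \eqref{eq:defofKav} of $\K^{\average}_k$ and exploit the group symmetry to reduce to the diagonal. Since $\psi$ (and hence each $\tilde b_j$, being a polynomial in its derivatives) may be taken $G$-invariant, we have $\K_k(\zeta^u y,\zeta^v x)=\K_k(\zeta^{u-v}y,x)$; setting $y=x$ and summing over the $m$ pairs $(u,v)$ with fixed difference $w=u-v$ collapses the double sum to
\[ \K^{\average}_k(x,x)e^{-k\phi(x)} = \sum_{w=0}^{m-1}\lambda^{-kw}\,\K_k(\zeta^w x,x)\,e^{-k\phi(x)}. \]
Accordingly I would split $\B^{\orb}_k$ into its diagonal ($w=0$) and off-diagonal ($w\neq0$) parts and treat them separately.

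For the diagonal part, $\psi(x,x)=\phi(x)$ gives $\K_{k+i}(x,x)e^{-(k+i)\phi(x)}=\sum_{j=0}^{N+r}\tilde b_j(x,x)(k+i)^{n-j}$ exactly, so the $w=0$ contribution to $\B^{\orb}_k$ is $\sum_{j}\tilde b_j(x,x)\sum_i c_i(k+i)^{n-j}$. Expanding $(k+i)^{n-j}$ binomially turns this into a genuine polynomial in $k$ whose coefficients are universal polynomials in the sums $\sum_i i^p c_i$ and the $\tilde b_j(x,x)$ (hence in the $c_i$ and the metric derivatives). Collecting the top $N+1$ powers and using $\tilde b_0=1$ and $\tilde b_1(x,x)=\tfrac12\Scal(\omega)$ yields $b_0=\sum_i c_i$ and $b_1=\sum_i c_i(ni+\tfrac12\Scal(\omega))$, with tail $O(k^{n-N-1})$; since the coefficients are smooth and uniformly bounded on $U$ this holds in $C^r$.

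The off-diagonal terms are the crux. First I would rewrite condition \eqref{eq:conditiononalphailocal} in the usable form $\sum_i c_i\lambda^{-iw}i^p=0$ for all $0<w<m$ and $0\le p\le N+r$; this follows by grouping $i$ into residues mod $m$ and using $\sum_{u=0}^{m-1}\lambda^{-uw}=0$. Consequently $\sum_i c_i\lambda^{-iw}Q(i)=0$ for every polynomial $Q$ of degree $\le N+r$, so Taylor expanding the $i$-dependence of each summand $\K_{k+i}(\zeta^w x,x)e^{-(k+i)\phi(x)}$ to order $N+r$ annihilates the entire polynomial part and leaves only a remainder carrying a factor $i^{N+r+1}$. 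Writing $g_w(x):=\psi(\zeta^w x,x)-\phi(x)$, the remainder is controlled by derivatives in $i$ of $(k+i)^{n-j}e^{(k+i)g_w(x)}$, which are bounded by $\sum_{a+b=N+r+1}k^{n-j-a}|g_w(x)|^b e^{k\Re g_w(x)}$.

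The key estimate I would establish is the \emph{quadratic} bound $|g_w(x)|\le C\|\zeta^w x-x\|^2$, sharpening the given $\Re g_w(x)\le-\delta\|\zeta^w x-x\|^2$ from \eqref{refpsi}. Its real part is quadratic because $\Re g_w\le 0$ attains its maximum $0$ on the fixed locus $V_w=\ker(\zeta^w-\Id)$; its imaginary part is also quadratic because the linear part of $g_w$ in directions transverse to $V_w$ is governed by $\partial\phi\circ(\zeta^w-\Id)$, which vanishes on $V_w$ since $G$-invariance of $\phi$ forces $\partial\phi$ to be $\zeta^w$-invariant there. Feeding $|g_w|\le C\|\zeta^w x-x\|^2$ into $\sup_t t^{2b}e^{-k\delta t^2}=O(k^{-b})$ bounds each off-diagonal term by $O(k^{n-N-r-1})$ in $C^0$. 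For the $C^r$ statement, each $x$-derivative falls on $g_w$ or on the exponential and, because $\partial g_w=O(\|\zeta^w x-x\|)$ (again by the quadratic vanishing), costs only a factor $k^{1/2}$; hence $r$ derivatives leave the off-diagonal contribution $O(k^{n-N-1-r/2})=O(k^{n-N-1})$ in $C^r$, which I would make rigorous using the integral form of the Taylor remainder so that the intermediate point does not depend on $x$, and working on $U$ shrunk so that the above bounds hold uniformly. The main obstacle is precisely this quadratic bound and the accompanying uniform $C^r$ estimate over the whole fixed locus of each $\zeta^w$ (not merely the origin): with only the linear bound $|g_w|\lesssim\|\zeta^w x-x\|$ one obtains half-powers $O(k^{n-(N+r+1)/2})$, insufficient unless $r\ge N+1$, so it is exactly the quadratic vanishing of the imaginary part that lets the weighted sum flatten the orbifold periodicity to the required order.
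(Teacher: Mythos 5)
Your proposal is correct and follows essentially the same route as the paper: the same diagonal/off-diagonal split of the averaged kernel, the same reformulation of \eqref{eq:conditiononalphailocal} as the vanishing of $\sum_i c_i i^p\sigma^i$ at nontrivial $m$-th roots of unity (Lemma \ref{lem:lemmaonconstants}), and the same Gaussian estimate $t^{2s}e^{-k\delta t^2}=O(k^{-s})$ to kill the off-diagonal terms --- the paper merely packages the cancellation as a factorization $[\,\cdot\,](\eta-1)^{N+r-l+1}\eta^k$ with $\eta=e^{\psi(\zeta^u x,\zeta^v x)-\phi(x)}$ rather than as a Taylor remainder in $i$, and reduces $C^r$ to $C^0$ via the recursion $du_{l,k}=\frac{d\eta}{\eta}(ku_{l,k}+u_{l+1,k})$. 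The quadratic bound $|\psi(\zeta^u x,\zeta^v x)-\phi(x)|=O(\|x_1\|^2)$ that you identify as the crux is indeed what the paper uses (asserted there without proof in the last line of its Claim), and your justification via the vanishing of $\partial\phi$ on $\im(\zeta^w-\Id)$ along the fixed locus is correct.
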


Our proof requires a reformulation of the condition made on the $c_i$.

\begin{lem}\label{lem:lemmaonconstants}
The constants $c_i$ satisfy \eqref{eq:conditiononalphailocal} 
if and only if the function $\sum_i c_i z^i$ has a root of order $N+r+1$ at every $m$-th root of unity other than $1$.
\end{lem}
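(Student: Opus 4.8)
The plan is to translate both sides of the claimed equivalence into statements about the vanishing of derivatives of $f(z):=\sum_i c_i z^i$ at the nontrivial $m$-th roots of unity, and then match them up. Throughout let $\lambda=e^{2\pi i/m}$, so the nontrivial $m$-th roots of unity are the powers $\lambda^t$, $t=1,\dots,m-1$.

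First I would reformulate the left-hand condition by a discrete Fourier transform on $\ZZ/m\ZZ$. For fixed $p$ set $S_u^{(p)}:=\sum_{i\equiv u}i^p c_i$; since $\sum_u S_u^{(p)}=\sum_i i^p c_i$, condition \eqref{eq:conditiononalphailocal} says precisely that for each $p=0,\dots,N+r$ the finite sequence $(S_u^{(p)})_{u=0}^{m-1}$ is constant in $u$. A sequence on $\ZZ/m\ZZ$ is constant if and only if all its Fourier coefficients at the nontrivial characters vanish, and using that $\lambda^{tu}=\lambda^{ti}$ whenever $i\equiv u$ one computes the $t$-th coefficient as
\[
\sum_{u=0}^{m-1}\lambda^{tu}S_u^{(p)}=\sum_i i^p c_i\,\lambda^{ti}.
\]
Hence \eqref{eq:conditiononalphailocal} is equivalent to $\sum_i i^p c_i\lambda^{ti}=0$ for all $t=1,\dots,m-1$ and all $p=0,\dots,N+r$.

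Next I would recognise these sums as derivatives of $f$. Writing $\theta:=z\tfrac{d}{dz}$ for the Euler operator, we have $\theta^p z^i=i^p z^i$, so $\sum_i i^p c_i\lambda^{ti}=(\theta^p f)(\lambda^t)$, and the condition becomes $(\theta^p f)(\lambda^t)=0$ for $0\le p\le N+r$ and $1\le t\le m-1$. It then remains to show that at a fixed \emph{nonzero} point $z_0$ the vanishing of $\theta^0 f,\dots,\theta^{N+r}f$ is equivalent to the vanishing of the ordinary derivatives $f,f',\dots,f^{(N+r)}$ — that is, to $f$ having a zero of order (at least) $N+r+1$ at $z_0$. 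This is a triangular change of basis: expanding $\theta^p=\sum_{j=0}^p S(p,j)\,z^j\tfrac{d^j}{dz^j}$ with Stirling numbers $S(p,j)$ and leading coefficient $S(p,p)=1$, a short induction on $p$ shows that, because $z_0\neq0$ forces each $z_0^{\,j}\neq0$, the two systems of linear conditions on the vector $(f^{(j)}(z_0))_{j=0}^{N+r}$ cut out the same locus. Applying this at each $z_0=\lambda^t$ gives the stated equivalence, the example $f(z)=\bigl((z^m-1)/(z-1)\bigr)^{N+r+1}$ of the Remark being the case where the order is exactly $N+r+1$.

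The argument is essentially bookkeeping, so I do not anticipate a serious obstacle. The two points needing care are the Fourier step — making sure that constancy of $(S_u^{(p)})_u$ is matched to vanishing at the nontrivial characters only, and that $\lambda^{tu}$ collapses correctly to $\lambda^{ti}$ under the congruence $i\equiv u$ — and the invertibility of the $\theta^p\leftrightarrow d^j/dz^j$ change of basis, which relies entirely on evaluating away from $z=0$.
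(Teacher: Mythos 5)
Your argument is correct and is essentially the paper's own proof: the Fourier-transform computation on $\ZZ/m\ZZ$ is exactly the two displayed calculations in the paper (grouping $\sum_i i^pc_i\sigma^i$ by residue class for one direction, inverting with $\frac1m\sum_s\sigma^{-su}$ for the other). The only difference is that you make explicit, via the Euler operator $z\frac{d}{dz}$ and the triangular Stirling-number change of basis at $z_0\neq0$, the step identifying the vanishing of $\sum_i i^pc_i\sigma^i$ for $p\le N+r$ with a root of order $N+r+1$, which the paper asserts without comment.
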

\begin{proof}
If \eqref{eq:conditiononalphailocal} holds, then for each $0\le p\le N+r$ the quantity
$ c= \sumskip{i^p c_i}$ is independent of $u$.  Hence if $\sigma^m=1$ with
$\sigma\neq 1$, then
$$
  \sum_{i} i^{p}c_i\sigma^i=
\sum_{u=0}^{m-1}  \sumskip{i^p c_i \sigma^i} = \sum_{u=0}^{m-1}  \sigma^u \sumskip{i^p c_i}  = c\sum_{u=0}^{m-1} \sigma^{u}=0,
$$
proving that $\sum_i c_i z^i$ has a root of order $N+r+1$ at $\sigma$.  For
the converse, let $\sigma$ be a primitive root of unity, so the hypothesis is that $\sum_i i^p c_i \sigma^{si}=0$ for $1\le s\le m-1$ and $0\le p\le N+r$, so for any given $u$,
\[\frac{1}{m} \sum_i i^p c_i = \frac{1}{m}\sum_{s=0}^{m-1} \sigma^{-su} \sum_i i^p c_i \sigma^{si}=\frac{1}{m}\sum_i i^p c_i\sum_{s=0}^{m-1} \sigma^{(i-u)s  }=\sumskip{i^p c_i}. \]
\end{proof}

\begin{proof}[Proof of Theorem \ref{thm:localexpansion}]
Write
\begin{eqnarray*}
  \B_k^{\orb}(x) &=& \frac{1}{m}\sum_{i} c_i \sum_{u,v=0}^{m-1}  \lambda^{(k+i)(v-u)} \K_{k+i}(\zeta^u x,\zeta^v x) 
e^{-(k+i)\phi(x)} \\
&=& S_1 + S_2,
\end{eqnarray*}
where $S_1$ consists of the terms with $u=v$ and $S_2$ consists of the terms with $u\neq v$.  We show below that  $S_2$ is $O(k^{n-N-r-1})$. Observe that 
\begin{eqnarray*}
  S_1 &=& \frac{1}{m} \sum_{i} c_i \sum_{u=0}^{m-1} \K_{k+i}(\zeta^u x, \zeta^u x) e^{-(k+i)\phi(x)} \\
&=& \sum_{i} c_i  \K_{k+i}(x, x) e^{-(k+i)\phi(x)},
\end{eqnarray*}
by the invariance of $\K_{k+i}$, and since each $\K_{k+i}e^{-(k+i)\phi}$ can be expanded in $k$, the same is true of $S_1$. In fact, by the binomial expansion $(k+i)^{n-j} = \sum_{u=0}^{N+r}\binom{n-j}{u} k^{n-j-u} i^u + O(k^{n-N-r-1})$,
\begin{eqnarray*}
  S_1&=& \sum_{i} c_i \sum_{j=0}^{N+r} (k+i)^{n-j} \tilde{b}_j(x,x)\\
&=& \sum_{j=0}^{N+r} b_j(x) k^{n-j} + O(k^{n-N-r-1}),
\end{eqnarray*}
 where 
\[ b_j (x) = \sum_{q=0}^j \left(\tilde{b}_q(x,x) \binom{n-q}{j-q} \sum_{i} c_i i^{j-q}\right).\]
In particular, $b_0$ and $b_1$ are as claimed in the statement of the theorem. \medskip   

Now to bound $S_2$,  write $S_2 = \frac{1}{m}\sum_{u\neq v} S_{u,v}$, where
\[ S_{u,v} = \sum_{i} c_i \K_{k+i}(\zeta^u x, \zeta^v x) \lambda^{(k+i)(v-u)} e^{-(k+i)\phi(x)}.\]
Fixing $u\neq v$, let $\sigma = \lambda^{v-u}$ and note that since
$\lambda$ is primitive, $\sigma\neq 1$.  Furthermore, set \[\eta = \eta(x)= e^{\psi(\zeta^u x,\zeta^v x) - \phi(x)},\]
so
\begin{eqnarray*}
 S_{u,v} &=&  \sum_{i}c_i \sum_{j=0}^{N+r} (k+i)^{n-j} \tilde{b}_j(\zeta^u x,\zeta^v x)  \sigma^{k+i} \eta^{k+i}\\
&=& \sigma^k \sum_{j=0}^{N+r} k^{n-j} \sum_{q=0}^j \tilde{b}_q(\zeta^u x, \zeta^v x) \binom{n-q}{j-q} \sum_{i} c_i i^{j-q} \sigma^i \eta^{k+i} \\
&&+ O(k^{n-N-r-1}).
\end{eqnarray*}
Therefore, to prove the theorem it is sufficient to show that for $0\le l\le N+r$, 
\begin{equation}\label{eq:suvbound}
  u_{l,k}:=\sum_{i} c_i i^{l} \sigma^i \eta^{k+i} = O(k^{l-N-r-1}).
\end{equation}
To this end, write
\begin{eqnarray*}
u_{l,k} &=&  \left[\frac{\sum_i c_i i^{l} \sigma^i \eta^i}{(\eta-1)^{N+r-l+1}}\right] (\eta-1)^{N+r-l+1}\eta^k.
\end{eqnarray*}
 From Lemma \ref{lem:lemmaonconstants},  the function $\eta \mapsto \sum_i
 c_i i^{l} \sigma^i \eta^i$ has a root of order $N+r-l+1$ at $\eta=1$, and so the term in square brackets is bounded. So it is sufficient to prove the following:\medskip

\noindent \emph{Claim: } Let $s\ge 1$ and $u\neq v$.  Then 
\[(\eta-1)^s \eta^k = O(k^{-s}).\]  
\noindent \emph{Proof: }  Since $G$ acts linearly on $U$, we can write
$U=U_1\times U_2$, where the action is faithful on $U_1$ and trivial on
$U_2$.  For $x\in U$, write $x=(x_1,x_2)$ under this decomposition.  Then there exist positive constants $c,c'$ such that $c\|x_1\|^2 \le \| \zeta^ux-\zeta^vx\|^2 \le c'\|x_1\|^2$ for all $x\in U$. Hence from \eqref{refpsi}, there is a $\delta'>0$ such that
\[ \Re(\psi(\zeta^u x,\zeta^v x)-\phi(x)) \le -\delta' \|x_1\|^2\]
for all $x\in U$.  If $x_1=0$, then $\eta=1$ and we are done.  Assuming
$x_1\neq 0$,  recall that if $z$ is a complex number with $\Re z<0$, then $|e^{kz}| \le \frac{s!}{(-k\Re z)^s}$ for all $k$.    Thus there is a constant $C$ such that
\[ \left \vert (\eta(x)-1)^s \eta(x)^k \right\vert\le \frac{C}{k^s}\left\vert\frac{\eta(x)-1}{\|x_1\|^2} \right\vert^s.\]
But $\psi(\zeta^u x,\zeta^v x)-\phi(x)=O(\|x_1\|^2)$, so $\frac{\eta(x)-1}{\|x_1\|^2}$ is bounded and the claim follows.
\end{proof}

\begin{rmk}\label{rmk:necessityofconstants}
  Conversely, suppose that $\mathcal B_k^{\orb}$ admits an asymptotic expansion in $C^0$ of order $N$ at the   point $x=0$ in $U$ which is fixed by the group action. We will show that
the $c_i$ must satisfy the conditions \eqref{eq:conditiononalphailocal} for $r=0$.

Since
  $\lambda$ is primitive and sections of orbi-ample line bundles
  vanish at orbifold points,
$$
\K_{k+i}^{\average}(0,0) = \left\{
  \begin{array}{ll}
    m\K_{k+i}(0,0) & \text{ if } k+i\equiv 0 \text{ mod } m, \\
    0 & \text{ otherwise.}
  \end{array}\right.
$$
Therefore
\begin{eqnarray*}
  \B_k^{\orb}(0) &=& m\sumskipp{i}{-k}{c_i \K^{\average}_{k+i}(0,0)}=m \sumskipp{i}{-k}{c_i \sum_{p=0}^N \tilde{b}_p(0,0) (k+i)^{n-p}} \\
  &=& \sum_{p=0}^N b_p k^{n-p} + O(k^{n-N-1}),
\end{eqnarray*}
where
\begin{equation} \label{bpk} b_p = \sum_{q=0}^p
  \left(\tilde{b}_q(0,0)\binom{n-q}{p-q} \sumskipp{i}{-k} c_i
    i^{p-q}\right).
\end{equation}
Each $b_p$ is periodic in $k$; the existence of an asymptotic
expansion of $\mathcal B_k^{\orb}(0)$ implies that in fact each $b_p$ is independent of $k$.

We now use induction on $p$ to show the conditions
\eqref{eq:conditiononalphailocal}, i.e.\ that $\sumskipp{i}{-k} c_i
i^p$ is also independent of $k$. For $p=0$, the sum \eqref{bpk} reduces
to $b_0=\sum_{i\equiv-k}c_i$ since $\tilde{b}_0(0,0)=1$. Therefore,
$b_0$'s independence of $k$ implies the $p=0$ case of
\eqref{eq:conditiononalphailocal}. For general $p$, the sum \eqref{bpk}
is $\binom np\sumskipp{i}{-k} c_i i^p$ plus terms shown inductively to
be independent of $k$. Therefore, we recover the conditions
\eqref{eq:conditiononalphailocal} for $r=0$.
\end{rmk}


\chapter{Global Expansion}\label{sec:global}

We will require the following standard estimate, which we prove for completeness. At
first we do not require $X$ to be compact.
 
\begin{lem}
Let $X$ be a K\"ahler manifold of dimension $n$.  There exists a constant
$C$ such that for any $p\in X$ we have $|f(p)|\le C(a\|\bar\partial f\|_{C^0} + a^{-n}\|f\|_{L^2})$ for all smooth functions $f$ and all sufficiently small $a>0$.  Moreover, if $X$ is compact then we can choose $a>0$ uniformly over all $p\in X$.
\end{lem}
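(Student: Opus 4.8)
The plan is to reduce the whole statement to a model computation on a flat Euclidean ball in $\C^n$ and then run a Bochner--Martinelli representation. First I would fix $p$ and pass to geodesic normal holomorphic coordinates centred at $p$, in which the K\"ahler metric satisfies $g_{i\bar\jmath}(z)=\delta_{ij}+O(\norm z^2)$. For $a$ below some threshold $a_0(p)$ the coordinate ball $B(p,a)$ lies in this chart, and the metric, its volume form, the operator $\bar\partial$, and hence the $C^0$- and $L^2$-norms appearing in the statement, are all comparable to their flat Euclidean counterparts up to a universal dimensional factor. Since normal coordinates always flatten the metric at the centre, this comparison holds at \emph{every} $p$ with a comparison constant depending only on $n$; when $X$ is compact the curvature and injectivity radius are bounded, so $a_0$ may be chosen uniformly in $p$. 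It therefore suffices to prove the inequality, with a constant $C=C_n$, for $f\in C^1$ on the flat ball $B_a=\{\norm z<a\}\subset\C^n$ with $p$ the origin.

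For that I would invoke the Bochner--Martinelli--Pompeiu formula: for each $0<t\le a$,
\[ f(0)=\int_{\partial B_t} f\,B-\int_{B_t}\bar\partial f\wedge B,\]
where $B$ is the Bochner--Martinelli kernel, whose coefficients are $O(\norm z^{-(2n-1)})$. The interior term is immediate: since $\int_{B_t}\norm z^{-(2n-1)}\,dV\sim t\le a$, we get $\big|\int_{B_t}\bar\partial f\wedge B\big|\le C_n\,a\,\|\bar\partial f\|_{C^0}$ uniformly in $t\le a$, which produces the first term on the right-hand side of the claimed estimate.

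The delicate part is the boundary integral, which as written only controls $f$ in $C^0$ on a sphere rather than in $L^2$ on a solid region. To remedy this I would average the representation over the radii $t\in[a/2,a]$. The averaged interior term remains $O(a\|\bar\partial f\|_{C^0})$, while the coarea formula turns the averaged boundary term into a volume integral over the annulus $A=\{a/2\le\norm z\le a\}$, on which the kernel is bounded by $C_n a^{-(2n-1)}$; Cauchy--Schwarz then gives
\[\Big|\tfrac{2}{a}\int_{a/2}^a\!\!\int_{\partial B_t} f\,B\,dt\Big|\le \frac{C_n}{a}\,a^{-(2n-1)}\int_A|f|\,dV\le \frac{C_n}{a}\,a^{-(2n-1)}(\operatorname{vol}A)^{1/2}\|f\|_{L^2}\le C_n\,a^{-n}\|f\|_{L^2},\]
since $\operatorname{vol}A\sim a^{2n}$. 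Combining the two bounds yields $|f(0)|\le C_n\big(a\|\bar\partial f\|_{C^0}+a^{-n}\|f\|_{L^2}\big)$, and transporting this back through the normal-coordinate comparison gives the lemma with a constant $C$ uniform in $p$, and with a uniform choice of $a$ when $X$ is compact. I expect the main obstacle to be precisely this conversion of the boundary term into an $L^2$ estimate: one must average over radii and keep careful track of the kernel's singularity, whereas the interior bound and the reduction to the Euclidean model are routine.
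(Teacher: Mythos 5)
Your argument is correct, but it is a genuinely different route from the paper's. The paper never leaves one complex dimension: it proves the $n=1$ case by hand (the identity $\int_{\partial D_a}\partial f/\partial\theta\,d\theta=0$ plus the bound on $\bar\partial f$ show that the circle average $\frac{1}{2\pi}\int_{\partial D_a}f\,d\theta$ has radial derivative $O(\epsilon)$; integrating in $a$ twice converts this into a bound for $|f(0)|$ by the $L^1$-average of $f$ over the disc plus $O(a\epsilon)$), and then iterates this one-dimensional estimate over coordinate discs in the $z_n,z_{n-1},\dots$ directions to reach a polydisc bound $|f(0)|\le \pi^{-n}a^{-2n}\int|f|+\frac{2na}{3}\epsilon$, finishing with Cauchy--Schwarz. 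Your proof instead works in all $n$ variables at once via the Bochner--Martinelli--Pompeiu formula, and the two steps you flag as delicate are handled correctly: the interior term is indeed $O(a\|\bar\partial f\|_{C^0})$ because $\int_{B_t}\|\zeta\|^{-(2n-1)}\,dV\sim t$, and averaging the representation over radii $t\in[a/2,a]$ legitimately converts the sphere integral into an annulus integral where the kernel is of size $a^{-(2n-1)}$, after which Cauchy--Schwarz gives the stated $a^{-n}\|f\|_{L^2}$ (the paper's double integration in $a$ is the one-dimensional avatar of exactly this radius-averaging). The trade-off: the paper's slicing argument is entirely elementary and self-contained, at the cost of an induction over coordinate directions and a slightly awkward polydisc geometry; yours is shorter and dimension-uniform but imports the Bochner--Martinelli kernel. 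Your reduction to the Euclidean model via normal coordinates is also essentially what the paper does (it compares the K\"ahler metric to the flat one on the small ball), and your observation that the comparison constant is universal while only the threshold $a_0(p)$ is point-dependent is consistent with the statement, which demands uniformity of the threshold only in the compact case.
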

 
\begin{proof}
We first prove the one dimensional case. Let $\epsilon =\sup|\bar\partial f|$, pick a local coordinate $z=r e^{i\theta}$ about the point $p=0$, and let $D_a$ denote
the ball $|z|=r\le a$ in this coordinate.  By the fundamental theorem of calculus,
\begin{equation} \label{UBC}
\frac1{2\pi a}\int_{\partial D_a}\frac{\partial
f}{\partial\theta}d\theta=0.
\end{equation}
We know that
$$
\bar\partial f=\frac{1}{2}\left(\frac{\partial f}{\partial r}+\frac
ir\frac{\partial
f}{\partial\theta} \right)(dr-ird\theta)
$$
has a pointwise bound on its norm of $O(\epsilon)$. Here we measure
norms of 1-forms in the standard metric, which by the compactness of $X$ is boundedly
close to the K\"ahler metric in $D_a$.
 
Then $\frac 1r\frac{\partial f}{\partial\theta}$ is within $\epsilon$ of
$i\frac{\partial f}{\partial r}$, and (\ref{UBC}) gives
$$
\frac1{2\pi}\left|\int_{\partial D_a}\frac{\partial f}{\partial
r}d\theta\right|=
\left|\frac1{2\pi}\frac d{da}\int_{\partial
D_a}fd\theta\right|\le\epsilon.
$$
Integrating with respect to $a$ gives the estimate
$$
|f(0)|\le\frac1{2\pi}\left|\int_{\partial D_a}fd\theta\right|+a\epsilon.
$$
Multiplying by $a$ and integrating again yields
$$
\frac{a^2}2|f(0)|\le\frac1{2\pi}\int_{D_a}|f|rdrd\theta+\frac{a^3}3\epsilon.
$$
By the Cauchy-Schwarz inequality, this gives the desired bound on
$|f(0)|$
in terms of the $L^2$-norm of $f$ and
the pointwise supremum of $|\bar\partial f|$.\medskip

Now we pass to the general case. Pick local coordinates $z_i$ in which
$p$ is the origin, and apply the above argument over one dimensional discs
in the $z_n$ direction to bound
\begin{equation} \label{line1}
|f(z_1,\ldots,z_{n-1},0)|\le\frac1{\pi a^2}\int_{|z_n|\le
a}|f(z_1,\ldots,z_{n_1},z_n)|+\frac{2a}3\epsilon.
\end{equation}
Similarly,
$$
|f(z_1,\ldots,z_{n-2},0,0)|\le\frac1{\pi a^2}\int_{|z_{n-1}|\le
a}|f(z_1,\ldots,z_{n_2},z_{n-1},0)|+\frac{2a}3\epsilon,
$$
which by (\ref{line1}) can be bounded by 
$$
|f(z_1,\ldots,z_{n-2},0,0)|\le\frac1{\pi^2a^4}\int_{|z_n|,|z_{n-1}|
\le a}|f(z_1,\ldots,z_n)|+\frac{2a}3\epsilon+\frac{2a}3\epsilon.
$$
Inductively, we find that
$$
|f(0)|\le\frac1{\pi^na^{2n}}\int_{|z_i|\le
a}|f(z_1,\ldots,z_n)|+\frac{2na}3\epsilon.
$$
Again an application of Cauchy-Schwartz gives the result. 
\end{proof}

\begin{cor}\label{cor:schwarzlemma}
Let $X$ be a compact K\"ahler orbifold and $L$ be an orbifold line bundle with hermitian
metric.  There exists a constant $C$ such that for any section $s\in\Gamma(L^k)$ and
$x\in X$ we have  $|s(x)|\le C(k^{-\frac12}\|\bar\partial s\|_{C^0} + k^{\frac n2}
\|s\|_{L^2})$.
\end{cor}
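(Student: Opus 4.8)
The plan is to deduce the estimate from the preceding Lemma by working upstairs in an orbifold chart, where everything reduces to a statement about a smooth function on an open subset of $\C^n$, and by matching the radius parameter $a$ of that Lemma to the curvature scale through the choice $a=k^{-1/2}$. Fix $x\in X$ and a chart $U\to U/G$ with a lift $\tilde x\in U$; since all norms are measured upstairs in the orbifold sense, it suffices to bound the corresponding quantity on $U$, which is an open set in $\C^n$ (and the factor $m$ from the orbifold volume convention will be harmless). Writing $\omega=\frac{i}{2\pi}\partial\bar\partial\phi$ on $U$ and trivialising $L|_U$ by a holomorphic frame $e$ of norm $|e|_h=e^{-\phi/2}$, we have $s=f\,e^{\otimes k}$ for a smooth function $f$ on $U$ with
\[ |s(y)| = |f(y)|\,e^{-k\phi(y)/2}, \qquad |\bar\partial s(y)| = |\bar\partial f(y)|\,e^{-k\phi(y)/2}.\]

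First I would normalise the frame at $\tilde x$. Replacing $e$ by $e\,e^{g}$ for the holomorphic polynomial $g(y)=\tfrac12\phi(\tilde x)+\langle\partial\phi(\tilde x),y-\tilde x\rangle$ changes $\phi$ to $\phi'=\phi-2\Re g$ with $\phi'(\tilde x)=0$ and $d\phi'(\tilde x)=0$; Taylor expansion then gives $\phi'(y)=O(\|y-\tilde x\|^2)$, with a bound uniform over the compact $X$. The point of this normalisation is that on the ball $D_a$ of radius $a=k^{-1/2}$ about $\tilde x$ one has $k\,\phi'(y)=O(k a^2)=O(1)$, so the Gaussian weight satisfies $e^{\pm k\phi'(y)/2}=O(1)$ uniformly on $D_a$.

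Next I would apply the preceding Lemma to the smooth function $f$ on $D_a$ with $a=k^{-1/2}$, which gives
\[ |f(\tilde x)| \le C\bigl(k^{-1/2}\,\|\bar\partial f\|_{C^0(D_a)} + k^{n/2}\,\|f\|_{L^2(D_a)}\bigr).\]
Since $\phi'(\tilde x)=0$ we have $|s(x)|=|f(\tilde x)|$, and the two terms translate back to $s$ via the bound of the previous paragraph: $\|\bar\partial f\|_{C^0(D_a)}=\sup_{D_a}|\bar\partial s|\,e^{k\phi'/2}\le C\|\bar\partial s\|_{C^0}$, while $\int_{D_a}|f|^2=\int_{D_a}|s|^2 e^{k\phi'}\le C\int_{U}|s|^2\le C'\|s\|_{L^2(X)}^2$, the last step absorbing the factor $m$. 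Combining these yields the claimed inequality, and $C$ is uniform in $x$ because $X$ is compact: covering it by finitely many charts makes the derivative bounds on $\phi$, hence the normal-frame construction and the estimate $ka^2=O(1)$, uniform, and for $k\gg0$ the radius $k^{-1/2}$ is small enough for the Lemma uniformly.

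The main obstacle is precisely the handling of the Gaussian weight $e^{-k\phi/2}$. Applying the Lemma naively to the function $f\,e^{-k\phi'/2}$, whose modulus is $|s|$, produces through $\bar\partial(e^{-k\phi'/2})$ an extra term of size $k^{1/2}\sup_{D_a}|s|$ that cannot be absorbed. The resolution — applying the Lemma instead to the frame function $f$, after passing to a normal frame so that $e^{\pm k\phi'/2}=O(1)$ on the scale-$k^{-1/2}$ ball — is exactly what makes both error terms of the right order; everything else is a routine translation between the chart and the quotient together with a compactness argument for uniformity.
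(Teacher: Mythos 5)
Your argument is correct and is essentially the paper's own proof: work upstairs in a chart, pass to a holomorphic frame normalised at the lift of $x$ so that on the ball of radius $k^{-1/2}$ the weight $e^{\pm k\phi'/2}$ is $O(1)$, apply the preceding Lemma with $a=k^{-1/2}$ to the frame function, and absorb the factor $\ord(x)$ relating the upstairs integral to the orbifold $L^2$-norm. Your write-up merely makes explicit two points the paper leaves implicit, namely that the normalisation must kill the first derivative of $\phi$ as well as its value, and that one must apply the Lemma to $f$ rather than to $|s|$.
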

 
\begin{proof}
This follows from the previous result, since we can work locally in an orbifold chart where
upstairs we have a smooth K\"ahler metric. We trivialise $L$ (and so each $L^k$) upstairs with a holomorphic section of norm $1$ at $x$ which is possibly not invariant under $G$. In a ball of radius $a=k^{-1/2}$ about $x$, the hermitian metric is then boundedly close to taking absolute values. Picking the coordinates in the above proof to be
invariant under the finite group gives, for $k\gg0$, a bound in terms of an integral upstairs over a $G$-invariant ball.
The actual integral on the orbifold differs from this by dividing by the
order of the group; since this is finite we get the same order bound.
\end{proof}

We now apply the results from the previous section to orbifold charts.  As usual, $X$ is an orbifold with cyclic quotient singularities, and $L$ is an orbi-ample line bundle with hermitian metric with curvature $-2\pi i\omega$.  Suppose that $U\to U/G$ is a small orbifold chart in $X$, where $U$ is a small ball centred at the origin in $\mathbb C^n$ and $G$ is cyclic of order $m$. By the orbi-ampleness condition, $L$ has a trivialisation
with weight $-1$ under the action of $G$ via an identification between $G$ and the $m$th roots of unity. Then sections of $L^k|_U$ are given locally by holomorphic functions $f$ of weight $k$. The pointwise norm of such a section is $|f(z)|e^{-k\phi(z)/2}$, where
the plurisubharmonic function $\phi$ is the norm squared of the trivialising section.

As previously mentioned, the local reproducing kernel
$\K_k^{\average}(y,x)$ on $U\times U$ defined in \eqref{eq:defofKav} has weight $k$ in the first variable, and weight $-k$ in the second and $\chi$ is an invariant cutoff function supported on $U$ and identically 1 on $\frac{1}{2}U$.  Thus multiplying by the local trivialisation, we think of $\chi \K_{k,x}^{\average}$ as a smooth section of $L^k\otimes \bar{L}^k_x$.    The local reproducing property for $\K_k^{\average}$ proved in Lemma \ref{lem:invariantreproducingkernel} says there is a neighbourhood $U_0\subset U$ of the origin such that if $x\in U_0$ and $t\in H^0(L^k)\otimes \bar{L}^k_x$, then
\[ t(y) = (t,\chi \K^{\average}_{k,y})_{L^2} + O(k^{-N-r-1})\|t\|_{L^2} \quad \text{for } y\in U_0,\]
where the error term is measured using the hermitian metric on
$L_y^k\otimes \bar{L}_x^k$.   In what follows, $U$ will be fixed, but $U_0$
will be allowed to shrink as required.    If $s\in H^0(L\boxtimes \bar{L})$, then applying this to $t=s_x=s(\ \cdot\ ,x)$ gives
\begin{equation}
 s(y,x) = (s_x, \chi \K^{\average}_{k,y})_{L^2} + O(k^{-N-r-1})\|s_x\|_{L^2} \label{eq:reproducingpropertyforglobal}.
\end{equation}

Now recall that $K_k=\sum_\alpha s_\alpha\boxtimes\overline{s}_\alpha$ denotes the global reproducing kernel discussed at the start of Section \ref{sec:bergman}. The next proposition shows how this is approximated by the local reproducing kernels $\K^{\average}_k$.

\begin{prop}\label{prop:global}
There is a neighbourhood $U_0\subset \frac14U$ of the origin such that
\[ K_k|^{}_{U_0\times U_0}= \K_k^{\average} + O(k^{n-N-r-1}).\]
\end{prop}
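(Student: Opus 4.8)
The plan is to fix $x\in U_0$ and show that the genuine global section $K_{k,x}=K_k(\,\cdot\,,x)$ of $L^k\otimes\bar L^k_x$ is $L^2$-close to the explicit smooth section $\chi\K^{\average}_{k,x}$, and then to upgrade this $L^2$-bound to the pointwise statement using the Schwarz-type estimate of Corollary \ref{cor:schwarzlemma}. The two inputs that drive the $L^2$ comparison are the global reproducing property $(t,K_{k,x})_{L^2}=t(x)$ and the local reproducing property of $\K^{\average}_k$ from Lemma \ref{lem:invariantreproducingkernel}; the bridge between them is a H\"ormander projection of $\chi\K^{\average}_{k,x}$ onto honestly holomorphic sections.

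First I would show that $\bar\partial(\chi\K^{\average}_{k,x})$ is $O(k^{-\infty})$, uniformly for $x\in U_0$. There are two contributions. The term $\chi\,\bar\partial_y\K^{\average}_{k,x}$ is controlled by the almost-holomorphicity in Theorem \ref{thm:BBS}: since $G$ acts by the holomorphic (indeed unitary) maps $\zeta^u$, applying $\bar\partial_y$ to each summand $\K_k(\zeta^u y,\zeta^v x)$ of \eqref{eq:defofKav} pulls back $\bar\partial_y\K_k$, which is $O(k^{-\infty})$ after removing the weight $e^{k(\phi(y)/2+\phi(x)/2)}$. The term $(\bar\partial\chi)\K^{\average}_{k,x}$ is supported where $\chi$ is non-constant, i.e.\ in $U\setminus\tfrac12 U$; for $x\in U_0\subset\tfrac14 U$ and such $y$, unitarity of $G$ gives $\|\zeta^u y-\zeta^v x\|$ bounded below for all $u,v$, so the Gaussian decay \eqref{refpsi} forces $|\K^{\average}_{k,x}(y)|\,e^{-k\phi(y)/2}$ to be exponentially small despite its $k^n$ prefactor. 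Solving $\bar\partial v=\bar\partial(\chi\K^{\average}_{k,x})$ on the compact orbifold $X$ by the H\"ormander estimate for the positive bundle $L^k$ (applied in the orbifold sense, $G$-equivariantly upstairs on charts exactly as in Corollary \ref{cor:schwarzlemma}) then yields, for $k\gg0$, a solution with $\|v\|_{L^2}=O(k^{-\infty})$. Hence $P:=\chi\K^{\average}_{k,x}-v$ is a global holomorphic section of $L^k$ lying within $O(k^{-\infty})$ of $\chi\K^{\average}_{k,x}$ in $L^2$.

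Next I would compare $P$ with $K_{k,x}$. For any holomorphic $t\in H^0(L^k)$ the global reproducing property gives $(t,K_{k,x})_{L^2}=t(x)$, while Lemma \ref{lem:invariantreproducingkernel} gives $(t,\chi\K^{\average}_{k,x})_{L^2}=t(x)+O(k^{-N-r-1})\|t\|_{L^2}$ (here I use that $\chi$ is real, that the orbifold $L^2$-pairing restricted to $U$ agrees with $(\,\cdot\,,\cdot)_{k\phi,m}$, that the error of the Lemma is uniform on $U_0$, and that the local norm over $U$ is dominated by the global one). Subtracting and absorbing $(t,v)_{L^2}=O(k^{-\infty})\|t\|_{L^2}$ yields $(t,K_{k,x}-P)_{L^2}=O(k^{-N-r-1})\|t\|_{L^2}$; taking $t=K_{k,x}-P$, itself a global holomorphic section, gives $\|K_{k,x}-P\|_{L^2}$ small, hence $\|K_{k,x}-\chi\K^{\average}_{k,x}\|_{L^2}$ small. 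Finally, on $U_0\subset\tfrac12 U$ we have $\chi\equiv1$, so applying Corollary \ref{cor:schwarzlemma} in the $y$-variable to $s=K_{k,x}-\chi\K^{\average}_{k,x}$, using $\|\bar\partial s\|_{C^0}=\|\bar\partial(\chi\K^{\average}_{k,x})\|_{C^0}=O(k^{-\infty})$ together with the $L^2$-bound, produces $K_k(y,x)=\K^{\average}_k(y,x)+O(k^{n-N-r-1})$ for $x,y\in U_0$, the factor $k^{n/2}$ in that estimate accounting for the power of $k$.

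The main obstacle is making every estimate uniform in $x$ (and then $y$) over $U_0$ while respecting the orbifold structure. Concretely, the H\"ormander solution operator and the Schwarz-type lemma must be applied $G$-equivariantly on charts so that the constants do not degenerate along the fixed locus, and one must check that the almost-holomorphicity of Theorem \ref{thm:BBS} and the off-diagonal decay \eqref{refpsi} are inherited by the averaged kernel $\K^{\average}_k$ uniformly across the finitely many group elements $\zeta^u,\zeta^v$. The delicate bookkeeping is that of the weight factors $e^{k\phi}$, which must cancel correctly both when passing between function values and section norms and when converting the $L^2$-bound to a $C^0$-bound; everything else follows from the cited reproducing properties and a routine H\"ormander argument.
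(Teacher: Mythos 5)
Your proposal is correct, and all the essential estimates coincide with those in the paper: the two-term bound on $\bar\partial(\chi\K^{\average}_{k,x})$ (the $\bar\partial\chi$ term killed by the off-diagonal decay \eqref{refpsi}, the $\chi\,\bar\partial\K^{\average}$ term by the second statement of Theorem \ref{thm:BBS}), the H\"ormander estimate, and Corollary \ref{cor:schwarzlemma} to pass from $L^2$ to pointwise bounds. Where you genuinely diverge is in how the two kernels are compared. The paper feeds $s=K_k$ directly into the local reproducing identity \eqref{eq:reproducingpropertyforglobal}; to control the resulting error term it needs the a priori bound $\|K_{k,x}\|_{L^2}=O(k^n)$, obtained from the extremal characterisation and Berman's uniform estimate $B_k\le Ck^n$ via Cauchy--Schwarz, and it then recognises the residual $w(y,\,\cdot\,)=\chi\K^{\average}_{k,y}-(\chi\K^{\average}_{k,y},K_{k,\,\cdot\,})_{L^2}$ as the $L^2$-minimal solution of $\bar\partial u=\bar\partial(\chi\K^{\average}_{k,y})$, bounded by H\"ormander and then pointwise by Corollary \ref{cor:schwarzlemma}. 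You instead first manufacture a global holomorphic approximant $P=\chi\K^{\average}_{k,x}-v$ and compare it to $K_{k,x}$ by testing against holomorphic $t$ and taking $t=K_{k,x}-P$; this self-pairing removes any need for the $O(k^n)$ bound on $\|K_{k,x}\|_{L^2}$, and incidentally yields the slightly sharper error $O(k^{n/2-N-r-1})$ after the Schwarz-type lemma. The paper's version is more economical in that the H\"ormander input appears only once and the decomposition \eqref{eq:reproducelocalglobal} is immediate; yours is more self-contained in that it avoids the cited extremal estimate. Both are valid; just make sure, as you partly note, that the error in Lemma \ref{lem:invariantreproducingkernel} is uniform in the test section (it is: the implied constant depends only on $U_0$), and that $U_0$ is taken $G$-invariant so that $\|\zeta^u y-\zeta^v x\|$ is bounded below for all pairs $u,v$ when $y$ lies in the support of $\bar\partial\chi$ and $x\in U_0$.
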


\begin{proof}
This is identical to the manifold case \cite[theorem 3.1]{berman_berndtsson_sjoestrand:08:direc_approac_to_bergm_kernel}, and for convenience we sketch the details.  The Bergman kernel has the extremal characterisation $B_k(x) = \sup_{\|s\|=1} |s(x)|^2$ where the supremum is over all sections of $L^k$ of unit $L^2$-norm.  Using this, it is shown in \cite[theorem 1.1]{berman:04:bergm_kernel_local_holom_morse_inequal} that there is a constant $C$ such that  $B_k(x)\le Ck^n$ uniformly on $X$ (the cited  work is for  noncompact manifolds; what is important is that it is local, so the inequality only improves if we apply it upstairs on an orbifold chart and restrict to invariant sections).

By the Cauchy-Schwarz inequality applied to $K_k= \sum_{\alpha} s_{\alpha}\boxtimes \bar{s}_{\alpha}$, we have $|K_k(y,x)|_h \le \sqrt{B_k(x) B_k(y)} = O(k^n)$, so in particular $\|K_{k,x}\|_{L^2} = O(k^n)$ uniformly in $x$.   Putting $s:=K_k$ (which is holomorphic) into the local reproducing property, \eqref{eq:reproducingpropertyforglobal} gives
\begin{eqnarray*}
K_k(y,x) &=& (K_{k,x}, \chi\K_{k,y}^{\average})_{L^2} + O(k^{n-N-r-1}) \nonumber 
\end{eqnarray*}
on $U_0\times U_0$. Then for $x,y\in U_0$ we can write
\begin{eqnarray}
  K_k(y,x)&=&  \K^{\average}_{k}(y,x) - \overline{w(y,x)} + O(k^{n-N-r-1}),
  \label{eq:reproducelocalglobal}
\end{eqnarray}
where
\begin{eqnarray*}
 w(y,x) &:=& \overline{\K^{\average}_{k}(y,x)} - (\chi \K^{\average}_{k,y},K_{k,x})_{L^2}\\
&=& \chi(x) \K^{\average}_{k,y}(x) - (\chi \K^{\average}_{k,y},K_{k,x})_{L^2},
\end{eqnarray*}
as $\K_k^{\average}$ is hermitian. With the aim of bounding $w$, define $u_y(\,\cdot\,): = w(y,\,\cdot\,)$ so
 \[u_y(\,\cdot\,) = \chi \K^{\average}_{k,y} -  (\chi
 \K^{\average}_{k,y},K_{k,\,\cdot\,})_{L^2}.\]  Notice that the inner product in this expression is precisely the projection of $ \chi \mathcal K^{\average}_{k,y}$ onto the space of holomorphic sections of $L^k$.  So, said another way,  $u_y$ is the $L^2$-minimal solution of the equation
\begin{equation}
\bar{\partial} u_y = \bar{\partial}( \chi \K^{\average}_{k,y}). \label{eq:dbarequation}
\end{equation}
Now 
\begin{equation*}
\bar{\partial} (\chi \K^{\average}_{k,y})(t) =  \K^{\average}_{k,y}(t) \bar{\partial} \chi(t) + \chi(t) \bar{\partial} \K^{\average}_{k,y}(t),\label{eq:delbarxiK}
\end{equation*}
which we claim is $O(k^{-\infty})$ in $C^0$.  Since we are assuming that
$U_0\subset \frac{1}{4}U$,  $\chi$ is identically $1$ on $2U_0$ and supported on $U$ so  $\bar{\partial}\chi(t)$ vanishes for $t\in 2U_0$ and for $t\notin U$.  On the other hand, by \eqref{refpsi} there is a $\delta_1>0$ such that
\[\Re[2\psi(t,y) - \phi(t)-\phi(y)]\le -\delta_1 \|t-y\|^2\]
on $U\times U$. If $t\in U\backslash 2U_0$, so that $t$ is a bounded
distance away from $y$, then by Theorem \ref{thm:BBS},
\begin{eqnarray*}
 |\mathcal K_{k,y}(t)|_h^2 &=& O(k^{2n}) e^{2k\psi(t,y)} e^{-\phi(t)} e^{-\phi(y)} \\
&=& O(k^{2n}) e^{k[2\psi(t,y) - \phi(t)-\phi(y)]} \\
&\le& O(k^{2n}) e^{-\delta_2 k}\ =\ O(k^{-\infty}).
\end{eqnarray*}

Thus $ \mathcal K_{k,y}\bar{\partial}\chi= O(k^{-\infty})$ in $C^0$ on all
of $X$, and furthermore the $O(k^{-\infty})$ term is independent of $y\in U_0$.  Applying this to $\xi^v y$ as $v$ ranges over a period, we get $ \mathcal K^{\average}_{k,y}\bar{\partial}\chi = O(k^{-\infty})$ as well.  Now from the second statement in Theorem \ref{thm:BBS}, $\chi \bar{\partial}  \mathcal K^{\average}_{k,y} = O(k^{-\infty})$, hence $\bar{\partial} (\chi \K^{\average}_{k,y}) = O(k^{-\infty})$ in $C^0$, and therefore in $L^2$ as well.  \medskip

Thus applying the H\"ormander estimate to sections of $L\otimes \bar{L}_y$, we conclude
that $\| u_y \|_{L^2}  = O(k^{-\infty})$.
Using this and that the bound on $\bar{\partial} u_y$ is uniform in $x$ and $y$, Corollary \ref{cor:schwarzlemma} gives the pointwise estimate
\[ |w(y,x)|=|u_y(x)| = O(k^{-\infty})\quad \text{ on }U_0\times U_0.\]
Therefore \eqref{eq:reproducelocalglobal} becomes $K_k = \K^{\average}_k + O(k^{n-N-r-1})$ on $U_0\times U_0$, as required.
\end{proof}

So $\mathcal K_k^{\average}$ approximates $K_k$ to order $O(k^{n-N-r-1})$ in the $C^0$ norm.  We next use this to get a $C^r$ expansion at the expense of a factor of $O(k^r)$.

\begin{lem}\label{lem:cauchyestimate}
Suppose $f_k(x,y)$ is a sequence of functions on $U_0\times U_0$ such that $(\bar{\partial}_x f_k) e^{-k(\phi(x)/2 + \phi(y)/2)}$ and $(\partial_y f_k) e^{-k(\phi(x)/2 + \phi(y)/2)}$ are both of order $O(k^{-\infty})$.   Suppose also that $f_k(x,y) e^{-k(\phi(x)/2 + \phi(y)/2)} = O(k^q)$ uniformly on $U_0\times U_0$.
Then for any differential operator in $x$ and $y$ of total order $p$,
$$(D f_k) e^{-k(\phi(x)/2 + \phi(y)/2)} = O(k^{q+p}) \text{ on } \frac{1}{2}U_0\times \frac{1}{2}U_0.$$
\end{lem}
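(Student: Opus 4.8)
The plan is to prove a weighted Cauchy estimate, exploiting that the natural length scale for the weight $e^{-k(\phi(x)/2+\phi(y)/2)}$ is $1/k$ rather than $1$: over a complex disc of radius $\rho$ the factor $e^{k\phi/2}$ varies by $e^{O(k\rho)}$, so only on the scale $\rho\sim 1/k$ does it stay comparable to its value at the centre. On such a disc a single holomorphic derivative costs a factor $\rho^{-1}\sim k$, which is exactly the loss permitted by the statement. I would split a general operator $D$ of order $p$ into its \emph{good} (holomorphic-type) factors $\partial_{x_j},\bar\partial_{y_j}$ and its \emph{bad} factors $\bar\partial_{x_j},\partial_{y_j}$: the good factors are handled by iterated Cauchy estimates, each costing one power of $k$, while the bad factors are controlled directly by the hypothesis, which makes $\bar\partial_x f_k$ and $\partial_y f_k$, \emph{and their further derivatives}, negligible.

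The technical core is a single building block. Fixing a centre $x^0\in\tfrac12 U_0$ and freezing $y$ and the remaining $x$-coordinates (so the frozen factor $e^{-k\phi(y)/2}$ is a constant), restrict a function $g$ to the disc $\{x^0+\zeta e_j:|\zeta|\le\rho\}$ with $\rho=c/k$. Since $\phi$ is smooth, $|\phi(x)-\phi(x^0)|\le C\rho$ on this disc, so $e^{k\phi(\cdot)/2}$ and $e^{k\phi(x^0)/2}$ differ by a bounded factor and the weighted supremum of $g$ over the disc is comparable to its weighted supremum over $\tfrac12 U_0$. The Cauchy--Pompeiu formula then gives
\[ |\partial_{x_j}g(x^0)|\ \le\ \frac{C}{\rho}\,\sup_{\mathrm{disc}}|g|\ +\ C\sup_{\mathrm{disc}}|\bar\partial_x g|, \]
the first (holomorphic Cauchy) term producing the factor $\rho^{-1}\sim k$ and the second bounding the mildly singular solid remainder. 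Multiplying by $e^{-k\phi(x^0)/2}$ turns this into a bound on weighted norms: $(\partial_{x_j}g)\,e^{-k\phi/2}$ is $O(k)$ times the weighted sup of $g$, plus the weighted sup of the defect $\bar\partial_x g$. The same estimate holds for $\bar\partial_{y_j}$ after reading $\bar y_j$ as the holomorphic coordinate, the defect then being $\partial_y g$.

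To assemble the estimate I would factor $D=\partial_x^{\alpha}\bar\partial_y^{\delta}\,\bar\partial_x^{\beta}\partial_y^{\gamma}$ (all factors commute). If $|\beta|+|\gamma|\ge 1$, then $Df_k$ is a derivative of $\bar\partial_x f_k$ or of $\partial_y f_k$, hence $O(k^{-\infty})$ by hypothesis, and this persists after the remaining good derivatives are applied (each costs only a power of $k$, which $O(k^{-\infty})$ absorbs). If instead $\beta=\gamma=0$, so that $D=\partial_x^{\alpha}\bar\partial_y^{\delta}$ is purely good, I induct on $|\alpha|+|\delta|$, peeling off one good derivative at a time with the building block: at each step the defect term stays $O(k^{-\infty})$ because it is again a derivative of $\bar\partial_x f_k$ or $\partial_y f_k$, while the main term multiplies the running weighted bound by $O(k)$. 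Starting from $f_k\,e^{-k(\phi(x)/2+\phi(y)/2)}=O(k^q)$ and taking $|\alpha|+|\delta|=p$ steps yields $O(k^{q+p})$. Only finitely many steps occur, so the domain is shrunk only finitely often, from $U_0$ down to $\tfrac12 U_0$, with all constants uniform; the overall bound is the maximum of the two cases, namely $O(k^{q+p})$.

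The step I expect to be most delicate is the building block, specifically the insistence on the scale $\rho\sim 1/k$: a $k$-independent disc would let the weight ratio $e^{k(\phi-\phi(x^0))/2}$ grow exponentially and destroy the estimate, so the Cauchy estimate must be carried out at exactly the natural scale, checking both that $k\rho=O(1)$ keeps the weight comparable and that $k\rho^2\to 0$ controls the quadratic part of $\phi$. One must also verify that the solid Cauchy--Pompeiu remainder is genuinely harmless; this is where the $O(k^{-\infty})$ smallness of the defect enters, and it is used in the $C^\infty$ sense (the defect \emph{together with all its derivatives} is negligible, as supplied by the second statement of Theorem \ref{thm:BBS}), so that it survives the peeling in the bad directions. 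Granting this robust smallness, the remaining bookkeeping is routine.
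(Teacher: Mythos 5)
Your proposal is correct and follows essentially the same route as the paper: a Cauchy(--Pompeiu) estimate on discs of radius $\sim k^{-1}$, where the weight $e^{-k\phi/2}$ stays comparable to its central value, so each holomorphic-type derivative costs one factor of $k$ while the $\bar\partial_x f_k$ and $\partial_y f_k$ defects contribute only $O(k^{-\infty})$ remainders. Your explicit good/bad factorisation of $D$ and your observation that iterating requires $O(k^{-\infty})$ control of the \emph{derivatives} of the defect (available in the application via the second statement of Theorem \ref{thm:BBS}) are points the paper passes over with ``the other derivatives are treated similarly,'' so your write-up is if anything slightly more careful.
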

\begin{proof}
Fix $(x,y)\in \frac{1}{2}U_0\times \frac{1}{2}U_0$.  Consider first the
case that $f(x,y)=f_k(x,y)$ is holomorphic in the first variable and anti-holomorphic in the second. Suppose $x=(x_1,\ldots x_n)$, $y=(y_1,\ldots,y_n)$ and let $F(t) = f(x_1,\ldots, t,\ldots,x_n, y_1,\ldots, y_n)$ and similarly $\Phi(t) =
\phi(x_1,\ldots,t,\ldots,x_n)$, where the $t$ lies in the $i$-th coordinate.  If $B$ denotes the ball of radius $k^{-1}$ around $x_i$, then for $k$ sufficiently large $(x_1,\ldots,t,\ldots,x_n)\in U$ for all $t\in B$.  So applying the Cauchy formula to $B$ gives
\begin{eqnarray*}  
  \left.\frac{\del f}{\del x_j}\right|_{(x,y)}\!\!\!\!\!\!\! \!\!\!e^{-k\left(\frac{\phi(x)}{2} +\frac{\phi(y)}{2}\right)} &=\!\! &
\int_{\partial B} \!\!\!\frac{F(t) e^{-k(\Phi(t)/2+\phi(y)/2)}}{2\pi i (t-x_j)^2}  e^{-k(\phi(x)/2 - \Phi(t)/2)}dt \\
&=& \int_{\partial B} \frac{O(k^q)}{(t-x_j)^2} dt\\
&=& O(k^{q+1}),
\end{eqnarray*}
where the second inequality uses $\phi(x) - \Phi(t) = O(|x_j-t|) = O(k^{-1})$ on $\partial B$, so the exponential term is $O(1)$.    The other derivatives are treated similarly.

More generally, if we assume only $\bar{\partial}_x f = O(k^{-\infty})$, then the calculation above holds up to a term $e^{-k(\phi(x)/2+\phi(y)/2)}\int_{B} \del F/\partial \bar t\, dt d\bar{t}$ which is of order $O(k^{-\infty})$, so the conclusion still holds.
\end{proof}

Recall that the Bergman kernel is $B_k(x) = |K_k(x, x)|_h$.

\begin{cor}\label{cor:approxofbergman}
For $k$ sufficiently large,
 $$B_k(x) = \mathcal K^{\average}_{k}(x,x) e^{-k\phi(x)} + O(k^{n-N-1})$$ in
 $C^r$ on $\frac{1}{2}U_0$.  Moreover, if $D$ is a differential operator of
 order $p$, then
\begin{equation}
D B_k = O(k^{n+p}) \text{ in } C^0 
\end{equation}
uniformly on $X$.
\end{cor}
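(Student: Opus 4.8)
The plan is to derive both assertions by applying the Cauchy-type estimate of Lemma \ref{lem:cauchyestimate} to suitable two-variable kernels and then restricting to the diagonal $y=x$. Throughout I work in the local trivialisation, so that on the diagonal the pointwise norm reads $B_k(x)=K_k(x,x)e^{-k\phi(x)}$ and the target approximant is $\K^{\average}_k(x,x)e^{-k\phi(x)}$; setting $f_k:=K_k-\K^{\average}_k$, the two differ by $f_k(x,x)e^{-k\phi(x)}$.

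For the first statement I would check that $f_k$ satisfies the hypotheses of Lemma \ref{lem:cauchyestimate} with $q=n-N-r-1$. The size hypothesis $f_k\, e^{-k(\phi(x)/2+\phi(y)/2)}=O(k^{n-N-r-1})$ in $C^0$ on $U_0\times U_0$ is exactly Proposition \ref{prop:global}. The holomorphy hypotheses hold because $K_k$ is genuinely holomorphic in its first variable and anti-holomorphic in its second (so the relevant Dolbeault derivatives vanish identically), while for $\K^{\average}_k$ the same derivatives are $O(k^{-\infty})$ after the Gaussian normalisation by the second part of Theorem \ref{thm:BBS}; this estimate survives the finite average \eqref{eq:defofKav} because $\phi$ and the $G$-action are compatible (the chain-rule factors from $\zeta^u$ are harmless and $\phi(\zeta^u x)=\phi(x)$). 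Lemma \ref{lem:cauchyestimate} then gives $(Df_k)\,e^{-k(\phi(x)/2+\phi(y)/2)}=O(k^{n-N-r-1+p})$ for every two-variable operator $D$ of order $p$ on $\tfrac12 U_0\times\tfrac12 U_0$. Finally I would expand a derivative of order $\le r$ of $g_k(x):=f_k(x,x)e^{-k\phi(x)}$ by Leibniz into a sum of terms (a diagonal restriction of a two-variable derivative of $f_k$ of order $|D_1|$) $\times$ (a derivative of $e^{-k\phi}$ of order $|D_2|$), with $|D_1|+|D_2|\le r$; since the second factor contributes $O(k^{|D_2|})$ and $e^{-k\phi(x)}=e^{-k(\phi(x)/2+\phi(y)/2)}|_{y=x}$, each term is $O(k^{n-N-r-1+|D_1|})\cdot O(k^{|D_2|})=O(k^{n-N-1})$. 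Hence $\|g_k\|_{C^r}=O(k^{n-N-1})$, which is the claimed expansion.

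The second statement is simpler and needs neither $\K^{\average}_k$ nor Proposition \ref{prop:global}: I would apply Lemma \ref{lem:cauchyestimate} directly to $K_k$, which satisfies the holomorphy hypotheses exactly, with $q=n$. The size input is $|K_k(y,x)|_h\le\sqrt{B_k(x)B_k(y)}=O(k^n)$ (Cauchy--Schwarz together with Berman's uniform bound $B_k\le Ck^n$ used in Proposition \ref{prop:global}), i.e.\ $K_k\,e^{-k(\phi(x)/2+\phi(y)/2)}=O(k^n)$. The lemma gives $(DK_k)\,e^{-k(\phi(x)/2+\phi(y)/2)}=O(k^{n+p})$ on $\tfrac12 U_0\times\tfrac12 U_0$, and the same Leibniz restriction to the diagonal yields $DB_k=O(k^{n+p})$; since this argument runs in a chart around any point of $X$ and $X$ is compact, the bound is uniform. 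The one point requiring care---and the main bookkeeping obstacle---is tracking the Gaussian weights $e^{-k(\phi/2+\phi/2)}$ through the diagonal Leibniz expansion: differentiating the weight costs a factor of $k$ per derivative, so passing from the $C^0$ bound to $C^r$ costs $k^r$, and the scheme works only because Proposition \ref{prop:global} carries the extra margin $k^{-r}$ (itself coming from taking $N+r$ terms in Theorem \ref{thm:BBS}), which exactly absorbs this loss.
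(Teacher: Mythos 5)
Your proof is correct and takes essentially the same route as the paper: apply Lemma \ref{lem:cauchyestimate} to $g_k-\K_k^{\average}$ (resp.\ to $g_k$ alone) with the $C^0$ size input from Proposition \ref{prop:global}, then restrict to the diagonal via the Leibniz rule, with the $k^{|D_2|}$ cost of differentiating $e^{-k\phi}$ absorbed by the extra $k^{-r}$ margin in Proposition \ref{prop:global}. The only cosmetic difference is that for the second statement you source the $O(k^n)$ bound directly from Cauchy--Schwarz and Berman's estimate rather than reading it off Proposition \ref{prop:global}; this is the same estimate.
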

\begin{proof}
Let $g_k$ be the local expression of $K_k$ in our trivialisation of $L|_U$, so $B_k(x) = |K_k(x,x)|_h = g_k(x,x) e^{-k\phi(x)}$.   Now Proposition \ref{prop:global} restricted to the diagonal implies $(g_k-\mathcal K_k^{\average})e^{-k\phi(x)} = O(k^{n-N-r-1})$ in $C^0$ on $U_0$.  So we can apply Lemma \ref{lem:cauchyestimate} to $f_k:=g_k-\mathcal K_k^{\average}$ to deduce that if $D$ is a differential operator of order $q$ in $x$ and $y$ then 
$$D(g_k - \mathcal K_k^{\average}) e^{-k(\phi(x)/2+\phi(x)/2)} = O(k^{n-N-r-1+q})$$
on $\frac{1}{2}U_0$.   Therefore, taking the first $r$ derivatives of
 $$
 B_k(x) - \mathcal K_k^{\average}(x,x)e^{-k\phi(x)} =
 \big(g_k(x,x)-\mathcal K_k^{\average}(x,x)\big)e^{-k\phi(x)}
 $$
 with respect to $x$ using the Leibnitz rule proves the first statement.\medskip

For the second statement,  Proposition \ref{prop:global} implies $g_ke^{-k(\phi(x)/2 + \phi(y)/2)}$ is $O(k^n)$ on $U_0\times U_0$. Thus from  Lemma
\ref{lem:cauchyestimate} we get that if $D$ has order at most $q$ then $(Dg_k) e^{-k(\phi(x)/2 + \phi(x)/2)} = O(k^{n+q})$ on $\frac{1}{2}U_0$.  Hence taking the derivatives of $B_k(x) = g_k(x,x) e^{-k\phi(x)}$ with the Leibnitz rule gives $DB_k = O(k^{n+p})$ on $\frac{1}{2}U_0$ for any differential operator of order $p$.  So by compactness the same holds uniformly on all of $X$.
\end{proof}

The proof of our main theorem is now immediate.   Each $x$ is contained in some open set $U_0$ for which Theorem \ref{thm:localexpansion} and Corollary
\ref{cor:approxofbergman}  hold, and by compactness there is a finite cover by such sets.   Thus
  \begin{eqnarray*}
    B_k^{\orb}(x) &=& \sum_{i} c_i B_{k+i}(x) 
= \sum_{i} c_i \K_{k+i}^{\average}(x,x) e^{-(k+i)\phi(x)} + O(k^{n-N-1}) \\
&=& \sum_{j=0}^N b_j(x) k^{n-j} + O(k^{n-N-1})
  \end{eqnarray*}
in $C^r$, as required.  \medskip

In \cite{ross_thomas:stabil_orbif} we use a variant of our main theorem, which for convenience we record here.

\begin{cor}\label{cor:homogeneousweighting}
Let $\gamma$ be a homogeneous polynomial in two variables of degree $d\le N$, and suppose that as usual the $c_i$ are chosen to satisfy \eqref{eq:conditiononalpha}.   Then there is a $C^{r}$-expansion
\[  \sum_{i} c_i\gamma(k,i) B_{k+i} = b_0 k^{n+d} + b_1 k^{n+d-1} + \cdots + O(k^{n-N+d-1}).\]
Moreover, if $\gamma(i,k) = Ak^d + B k^{d-1}i + \cdots$ then
$$
  b_0 =A \sum_{i} c_i \quad\text{ and } \quad   b_1 = \sum_{i} c_i (A[ni+\Scal(\omega)/2] + iB).
$$

\end{cor}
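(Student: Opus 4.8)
The plan is to reduce directly to the main theorem, Theorem \ref{thm:mainexpansion}, exploiting that the entire construction is linear in the weighting constants and that $\gamma$ is homogeneous. Writing $\gamma(k,i)=\sum_{a+b=d}\gamma_{ab}k^ai^b$, so that in the notation of the statement $A=\gamma_{d,0}$ and $B=\gamma_{d-1,1}$, I pull the powers of $k$ outside the sum over $i$ to get
\[\sum_i c_i\,\gamma(k,i)\,B_{k+i}=\sum_{b=0}^d \gamma_{d-b,b}\,k^{d-b}\sum_i c_i^{(b)}B_{k+i},\qquad c_i^{(b)}:=i^b c_i.\]
Thus it suffices to produce an expansion for each inner sum $\sum_i c_i^{(b)}B_{k+i}$ and then reassemble.

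The key observation is that the auxiliary constants $c_i^{(b)}=i^bc_i$ again satisfy the hypothesis \eqref{eq:conditiononalpha}, but only to the reduced order $N-b$ (with the same $r$). Indeed, the $p$-th instance of \eqref{eq:conditiononalpha} for $c_i^{(b)}$ is literally the $(p+b)$-th instance for $c_i$, namely $\frac1m\sum_i i^{p+b}c_i=\sum_{i\equiv u}i^{p+b}c_i$, and this holds for all $p+b\le N+r$, i.e.\ for $0\le p\le (N-b)+r$. Since $b\le d\le N$ we have $N-b\ge 0$, so Theorem \ref{thm:mainexpansion} applies with $N$ replaced by $N-b$. (The stated positivity of the $c_i$ plays no role in that proof, which is linear in them; in any case $c_i^{(b)}\ge 0$, with only $c_0^{(b)}=0$ for $b\ge 1$, which is harmless.)

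Applying Theorem \ref{thm:mainexpansion} to $c_i^{(b)}$ yields a $C^r$-expansion $\sum_i c_i^{(b)}B_{k+i}=\beta_0^{(b)}k^n+\cdots+\beta_{N-b}^{(b)}k^{n-N+b}+O(k^{n-N+b-1})$ with $\beta_0^{(b)}=\sum_i i^bc_i$ and $\beta_1^{(b)}=\sum_i i^bc_i(ni+\tfrac12\Scal(\omega))$. Multiplying by $\gamma_{d-b,b}k^{d-b}$ turns each of these into polynomial terms lying in the range $k^{n+d},\dots,k^{n+d-N}$, together with an error $\gamma_{d-b,b}k^{d-b}O(k^{n-N+b-1})=O(k^{n+d-N-1})$ in $C^r$; summing over $b$ collects these into a single $C^r$-expansion of order $N$ with error $O(k^{n-N+d-1})$, as claimed. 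Reading off coefficients, the power $k^{n+d}$ receives a contribution only from $b=0$ (the $\beta_0^{(0)}$ term), giving $b_0=A\sum_i c_i$; the power $k^{n+d-1}$ receives the $b=0$ contribution $\gamma_{d,0}\beta_1^{(0)}$ and the $b=1$ contribution $\gamma_{d-1,1}\beta_0^{(1)}$, giving $b_1=A\sum_i c_i(ni+\tfrac12\Scal(\omega))+B\sum_i ic_i=\sum_i c_i\big(A[ni+\Scal(\omega)/2]+iB\big)$.

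The only genuine content is the reassembly, and the one point that needs care is the order-reduction of the second paragraph: one must check that the shift by $b$ in the weight exactly matches the drop from $N$ to $N-b$, so that each rescaled error term $k^{d-b}O(k^{n-N+b-1})$ collapses to the common $O(k^{n-N+d-1})$ and the polynomial ranges for different $b$ overlap correctly to fill out the powers $k^{n+d},\dots,k^{n+d-N}$. Everything else is the linearity of the expansion in the $c_i$ together with the homogeneity of $\gamma$, so I do not expect any substantial obstacle beyond this indexing; in particular there is no need to revisit the local analysis of Theorem \ref{thm:localexpansion} or the Hörmander estimate, since those are already subsumed in Theorem \ref{thm:mainexpansion}.
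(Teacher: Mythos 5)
Your proposal is correct and follows essentially the same route as the paper: reduce by linearity to monomials $i^bk^{d-b}$, observe that $c_i^{(b)}=i^bc_i$ satisfy \eqref{eq:conditiononalpha} to the reduced order $N-b$ (same $r$), and apply Theorem \ref{thm:mainexpansion} to these new constants. Your version just spells out the reassembly of the error terms and the coefficient bookkeeping that the paper leaves implicit.
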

\begin{proof}
By linearity we may suppose $\gamma(i,k)= i^ak^{d-a}$ for some $a\le d$.    Since $c_i$ satisfy \eqref{eq:conditiononalpha} for $p=0,\ldots,N+r$, we get that $c_i':=i^ac_i$ satisfy this condition for $p=0,\ldots,N+r-a$.  Applying our main theorem with the constants $c_i'$ proves the corollary.
\end{proof}

\begin{rmk}
For example,
\[ \sum_{i} (k+i) c_i B_{k+i} = b_0 k^{n+1} + b_1 k^n + \cdots\]
where $b_0 = \sum_{i} c_i$ and $b_1 = \sum_{i} c_i[(n+1)i+ \Scal(\omega)/2]$.
\end{rmk}

\clearpage
\bibliography{orbibergman}

\providecommand{\bysame}{\leavevmode\hbox to3em{\hrulefill}\thinspace}
\providecommand{\MR}{\relax\ifhmode\unskip\space\fi MR }
\providecommand{\MRhref}[2]{%
  \href{http://www.ams.org/mathscinet-getitem?mr=#1}{#2}
}
\providecommand{\href}[2]{#2}
\begin{thebibliography}{DLM11}

\bibitem[Bai57]{baily:57:imbed_v_manif_in_projec_space}
W.~L. Baily, \emph{On the imbedding of {$V$}-manifolds in projective space},
  Amer. J. Math. \textbf{79} (1957), 403--430. {MR0100104}

\bibitem[BBS08]{berman_berndtsson_sjoestrand:08:direc_approac_to_bergm_kernel}
Robert Berman, Bo~Berndtsson, and Johannes Sj\"ostrand, \emph{A direct approach
  to {B}ergman kernel asymptotics for positive line bundles}, Arkiv f\"{o}r
  Matematik \textbf{46} (2008), no.~2, 197--217. {MR2430724}

\bibitem[Ber04]{berman:04:bergm_kernel_local_holom_morse_inequal}
Robert Berman, \emph{Bergman kernels and local holomorphic {M}orse
  inequalities}, Math. Z. \textbf{248} (2004), no.~2, 325--344. {MR2088931}

\bibitem[Cat99]{catlin:99:bergm_kernel_theor_tian}
David Catlin, \emph{The {B}ergman kernel and a theorem of {T}ian}, Analysis and
  geometry in several complex variables (Katata, 1997), Trends Math.,
  Birkh\"auser Boston, Boston, MA, 1999, pp.~1--23. {MR1699887}

\bibitem[DLM06]{dai-liu-ma(06):asymp_expansion_bergman_kernel}
Xianzhe Dai, Kefeng Liu, and Xiaonan Ma, \emph{On the asymptotic expansion of
  {B}ergman kernel}, J. Differential Geom. \textbf{72} (2006), no.~1, 1--41.
  {MR2215454}

\bibitem[DLM11]{xianzhe:11:remar_weigh_bergm_kernel_orbif}
Xianzhe Dai, Kefeng Liu, and Xiaonan Ma, \emph{A remark on weighted Bergman
  kernels on orbifolds}, arXiv:1107.4677, 2011.

\bibitem[Don01]{donaldson(01):scalar_curvat_projec_embed}
Simon Donaldson, \emph{Scalar curvature and projective embeddings. {I}}, J.
  Differential Geom. \textbf{59} (2001), no.~3, 479--522. {MR1916953}

\bibitem[Fef74]{fefferman:74:bergm_kernel_bihol_mappin_pseud_domain}
Charles Fefferman, \emph{The {B}ergman kernel and biholomorphic mappings of
  pseudoconvex domains}, Invent. Math. \textbf{26} (1974), 1--65. {MR0350069}

\bibitem[Kaw79]{kawasaki:79:rieman_roch_theor_for_compl_v_manif}
Tetsuro Kawasaki, \emph{The {R}iemann-{R}och theorem for complex
  {$V$}-manifolds}, Osaka J. Math. \textbf{16} (1979), no.~1, 151--159.
  {MR527023}

\bibitem[Lu00]{lu:00:lower_order_terms_asymp_expan}
Zhiqin Lu, \emph{On the lower order terms of the asymptotic expansion of
  {T}ian-{Y}au-{Z}elditch}, Amer. J. Math. \textbf{122} (2000), no.~2,
  235--273. {MR1749048}

\bibitem[MM07]{ma_marinescu:07:holom_morse_inequal_bergm_kernel}
Xiaonan Ma and George Marinescu, \emph{Holomorphic {M}orse inequalities and
  {B}ergman kernels}, Progress in Mathematics, vol. 254, Birkh\"auser Verlag,
  Basel, 2007. {MR2339952}

\bibitem[RT11]{ross_thomas:stabil_orbif}
Julius Ross and R.~P. Thomas, \emph{Weighted projective embeddings, stability
  of orbifolds and constant scalar curvature {K}\"{a}hler metrics}, J.
  Differential Geom. \textbf{88} (2011), 109--160.

\bibitem[Rua98]{ruan:98:canon_coord_bergm_bergm_metric}
Wei-Dong Ruan, \emph{Canonical coordinates and {B}ergmann metrics}, Comm. Anal.
  Geom. \textbf{6} (1998), no.~3, 589--631. {MR1638878}

\bibitem[Rub08]{rubenstein:08:geomet_quant_dynam_const_space_metric}
Yanir Rubenstein, \emph{Geometric quantization and dynamical constructions on
  the space of {K}\"{a}hler metrics}, Ph.D. Thesis. Massachusetts Institute of
  Technology, 2008.

\bibitem[Son04]{song:szegoe_kernel_orbif_circl_bundl}
Jian Song, \emph{The {S}zeg\"o kernel on an orbifold circle bundle}, Ph.D.\
  Thesis, Columbia University, 2004.

\bibitem[Tia90]{tian:90:set_polar_kaehl_metric_algeb_manif}
Gang Tian, \emph{On a set of polarized {K}\"ahler metrics on algebraic
  manifolds}, J. Differential Geom. \textbf{32} (1990), no.~1, 99--130.
  {MR1064867}

\bibitem[Yau86]{yau:86:nonlin_analy_in_geomet}
S-T Yau, \emph{Nonlinear analysis in geometry}, Monographies de L'Enseignement
  Math\'ematique \textbf{33} (1986). {MR865650}

\bibitem[Zel98]{zelditch:98:szeg_o_kernel_theor_tian}
Steve Zelditch, \emph{Szeg\"{o} kernels and a theorem of {T}ian}, Internat.
  Math. Res. Notices (1998), no.~6, 317--331. {MR1616718}

\end{thebibliography}
\begin{flushright}

{\scshape DPMMS, University of Cambridge, Cambridge, CB3 0WB, UK}
{\tt j.ross@dpmms.cam.ac.uk}\medskip

{\scshape Dept. of Mathematics, Imperial College, London, SW7 2AZ, UK}
{\tt richard.thomas@imperial.ac.uk}

\end{flushright}

\end{document}